\newtheorem{theorem}{Theorem}[section]
\newtheorem{definition}[theorem]{Definition}
\newtheorem{lemma}[theorem]{Lemma}
\newenvironment{proof}[1][Proof]{\noindent\textbf{#1.} }{\ \rule{0.5em}{0.5em}}
\begin{document}
\title{Clifford-Wolf translations of left invariant Randers metrics on compact Lie groups\footnote{Supported by NSFC(no.10671096 and 10971104) and SRFDP of China.}}
\author{  Shaoqiang Deng$^{1}$ and Ming Xu$^2$
\thanks{Corresponding author. E-mail: mgxu@math.tsinghua.edu.cn}\\
$^1$School of Mathematical Sciences and LPMC\\
Nankai University\\
Tianjin 300071, P. R. China\\
$^2$Department of Mathematical Sciences\\
Tsinghua University\\
Beijing 100084, P. R. China}
\date{}
\maketitle
\begin{abstract}
A Clifford-Wolf translation of a connected Finsler space  is an isometry which moves each point the sam distance. A Finsler space $(M, F)$ is called
Clifford-Wolf homogeneous if for any two point $x_1, x_2\in M$ there is a Clifford-Wolf translation $\rho$ such that $\rho(x_1)=x_2$.
In this paper, we study Clifford-Wolf translations of left invariant Randers metrics on compact Lie groups. The mian result is  that a left invariant Randers metric on a connected compact simple Lie group is Clifford-Wolf homogeneous if and only if the indicatrix of the metric is a round sphere with respect to a bi-invariant Riemannian metric. This  presents  a large number of examples of non-reversible Finsler metrics which are Clifford-Wolf homogeneous.

\textbf{Mathematics Subject Classification (2000)}: 22E46, 53C30.

\textbf{Key words}: Finsler spaces, Clifford-Wolf translations, Killing vector fields, compact Lie groups, left invariant Randers metrics.
\end{abstract}

\section{Introduction}
Let $(M, g)$ be a homogeneous Riemannian manifold.
An isometry of the Riemannian manifold $(M, g)$ is called a Clifford-Wolf translation (or simply CW-transformation) if it moves each point of $M$ the same distance.  If for any two points $x_1, x_2\in M$, there is a CW-translation $\rho$ such that $\rho(x_1)=x_2$, then $(M, g)$ is called Clifford-Wolf homogeneous (or simply CW-homogeneous).

 The general merit of the study of CW-translations lies in the fact that they are closely related to the study of homogeneous Riemannian manifolds.
  Let $\Gamma$ be a properly discontinuous  subgroup of the full group of isometries of $(M, g)$, which acts freely  on $M$. It is a very important problem in Riemannina geometry to find out the conditions that the quotient manifold $M/\Gamma$ is again a homogeneous Riemannian manifolds. Through the work of a number of researchers, we can say that at least in the cases that $(M, g)$ is a Riemannian symmetric space, or a homogeneous Riemannian manifold of non-positive curvature, or a homogeneous Riemannian manifold admitting a transitive group of isometries which is semisimple of the noncompact type, $M/\Gamma$ is a homogeneous Riemannian manifold, provided that $\Gamma$ consists of Cw-translations. Moreover, in the first two cases it is proved that the above condition is also sufficient; see  \cite{WO60}, \cite{WO62}, \cite{WO64} and \cite{DMW86} for the details.

 In view of the the above results, it is an important problem to determine all the CW-transformations of a given Riemannian manifold, particularly for the homogeneous ones. For Riemannian symmetric spaces this problem was first completely settled by J. A. Wolf in \cite{WO62}; see also \cite{FR63, Oz69, Oz74} for other proofs of the results. The related results has important application in the study of homogeneous Riemannian manifolds of negative curvature; see for example \cite{HE74, AW76}.

Recently the study on CW-translation has become active. In \cite{BN081, BN082, BN09},  Berestovskii and  Nikonorov studied the local one-parameter group of CW-translations of general Riemannian manifolds. The main result is that there is a correspondence between local one-parameter groups of CW-translations and Killing vector fields of constant length. They also obtained a classification of connected simply connected CW-homogeneous Riemannian manifolds. The list consists of the euclidean spaces, the odd-dimensional spheres with constant curvature, connected simply connected compact simple Lie groups with bi-invariant Riemannian metrics and the direct products of the above manifolds. Notice that there are also a local version of the related notion of CW-homogeneous Riemannian manifolds; see the above cited papers above.

In our recent papers \cite{DP} and  \cite{DM12}, we initiated the study of CW-translations of Finsler spaces. The notions of CW-translations and CW-homogeneity have been generalized to the Finslerian setting. It is proved that the correspondence between the local one-parameter subgroup of CW-translations and the Killing vector fields of constant length is also valid for a Finsler space; see the next section for the details. We also studied this problem for a special type of Finsler spaces-Randers spaces and obtained a necessary and sufficient condition for a smooth vector field $X$ on a homogeneous Randers space $(G/H, F)$, with $H$ connected, to be a Killing vector field of constant length. This result gives a complete classification for all the local one-parameter subgroup of CW-translations of a homogeneous Randers space.

In this paper we continue our study on this topic. Our main idea is to study the condition for a homogeneous Randers space to be CW-homogeneous, and futhermore, to classify all the CW-homogeneous Randers spaces. However, due to the complexity of the problem, we will only deal with left invariant Randers metrics on a compact simple Lie group. The main result of this paper is the following

\begin{theorem}\label{thmmain}
Let $G$ be a compact connected simple Lie group and $F$ be a left
invariant Randers metric. Then $(G,F)$ is CW-homogeneous if and only
if the indicatrix of $F$ in $\mathfrak{g}$ is a round sphere with respect to a
bi-invariant metric.
\end{theorem}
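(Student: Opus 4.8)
The plan is to characterize the Randers metric $F$ via its Zermelo data and reduce CW-homogeneity to an algebraic condition on the vector field generating the navigation description. Recall that a left invariant Randers metric on $G$ can be written as $F(x,y)=\alpha(x,y)+\beta(x,y)$, where $\alpha$ comes from a left invariant Riemannian metric and $\beta$ is a left invariant $1$-form of $\alpha$-norm less than $1$; equivalently, by the navigation representation, there is a left invariant Riemannian metric $h$ and a left invariant vector field $W$ with $h(W,W)<1$ such that $F$ is obtained from $(h,W)$. The bi-invariant structure on a compact simple $G$ is essentially unique up to scaling, so the statement ``the indicatrix is a round sphere with respect to a bi-invariant metric'' means precisely that $h$ is (a multiple of) the bi-invariant metric $Q$ coming from the Killing form. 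Thus the goal is to show that $(G,F)$ is CW-homogeneous exactly when the underlying Riemannian navigation metric $h$ is bi-invariant.

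\medskip
\noindent\textbf{Sufficiency.} First I would treat the ``if'' direction. Here I assume $h=Q$ is bi-invariant and $W$ is a left invariant vector field with $Q(W,W)<1$. The idea is to construct, for any two points $g_1,g_2\in G$, an explicit CW-translation carrying $g_1$ to $g_2$. For the bi-invariant metric itself, left translations and right translations are both isometries, and it is classical that right translations $R_a$ are Clifford-Wolf translations of $(G,Q)$ since $Q$ is bi-invariant. To handle the Randers perturbation I would invoke the Killing-vector-field criterion from \cite{DM12}: the navigation description converts Killing fields of constant length for $F$ into Killing fields of constant length for $h$ subject to a compatibility condition with $W$. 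I expect that when $W$ is chosen in the centralizer appropriately (or when one restricts to the relevant one-parameter groups), the right-invariant Killing fields of $Q$ remain Killing of constant $F$-length, and integrating them produces a transitive family of CW-translations. Showing transitivity is the substantive point: a single right translation need not preserve $W$, so I would combine right translations with the flow of $W$, using that $W$ is left invariant hence its flow commutes appropriately with left translations.

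\medskip
\noindent\textbf{Necessity.} The harder direction is ``only if'': assuming $(G,F)$ is CW-homogeneous, I must force $h$ to be bi-invariant. The strategy is to extract strong constraints from the existence of enough CW-translations. By the correspondence of \cite{BN081,BN082,DM12} between local one-parameter groups of CW-translations and Killing vector fields of constant length, CW-homogeneity requires, at each point and in particular at the identity $e$, Killing fields of constant length spanning all tangent directions. Translating through the navigation data, this yields a large space of $h$-Killing fields of constant $h$-length on $(G,h)$, which in turn forces $(G,h)$ itself to be CW-homogeneous as a Riemannian manifold. At this stage I would apply the Berestovskii--Nikonorov classification quoted in the introduction: a connected compact simple Lie group that is Riemannian CW-homogeneous must carry a bi-invariant metric, so $h$ is bi-invariant. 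Combined with the eigenvalue analysis of the operator relating $\alpha$ and $h$, this pins down the indicatrix of $F$ as a round sphere with respect to $Q$.

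\medskip
\noindent I expect the main obstacle to be the necessity direction, specifically the step that upgrades the existence of pointwise-transitively-many constant-length $F$-Killing fields into the statement that the navigation metric $h$ is Riemannian CW-homogeneous. The difficulty is that the Randers Killing condition from \cite{DM12} couples the candidate Killing field $X$ with $W$ in a nonlinear way, so one cannot simply discard $\beta$ and read off a Riemannian conclusion; I would need a careful analysis of the Killing equation on the Lie algebra $\mathfrak{g}$, diagonalizing the relevant symmetric operator with respect to $Q$ and showing that constant length of the $F$-Killing fields along every direction forces all eigenvalues to coincide. This eigenvalue/root-system argument, using simplicity of $G$ to control the adjoint action, is where the real work lies.
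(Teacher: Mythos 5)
Your outline correctly identifies the two directions and the general toolbox (navigation data, the Killing-fields-of-constant-length correspondence, the Berestovskii--Nikonorov classification), but it has two genuine gaps, one in each direction. In the sufficiency direction, producing constant-length Killing fields of $F$ in every tangent direction only yields \emph{restrictive} CW-homogeneity: the correspondence you invoke turns such a field into a CW-translation only for sufficiently small flow parameter $t$. To move an arbitrary pair of points you must show that the flow $g\mapsto \exp(tX)g\exp(-tV)$ remains a CW-translation up to the time needed to reach $g_2$ from $g_1$, and this is a global statement about minimizing geodesics. The paper devotes its entire second step to this: it identifies the $F$-geodesics explicitly as $\exp(tX)g\exp(-tV)$, proves a lemma (via an iteration $t_{i+1}$, $X_{i+1}$ with the contraction estimate $|t_{i+1}-t_i|\le |V|_{\mathrm{bi}}|t_i-t_{i-1}|$) showing that an $F$-geodesic fails to minimize exactly when the corresponding bi-invariant geodesic $\exp(tX)$ fails to minimize, and uses the point-independence of the latter condition to conclude the flow is a CW-translation for all relevant $t$. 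Your proposal does not engage with this at all, and without it the ``if'' direction only proves restrictive CW-homogeneity.

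In the necessity direction, the step you yourself flag as ``where the real work lies'' is indeed the crux, but the sketch you give (diagonalize a symmetric operator and force the eigenvalues to coincide) is not what is needed and is unlikely to close the gap as stated. The paper's route is: first a structure theorem forcing the right-invariant component $X'$ of any constant-length Killing field $(X,X')$ with $X\neq 0$ to lie in $\mathfrak{c}(\mathfrak{g}')$; then a shift correspondence reducing to the question of whether a single left-invariant $X$ can generate a constant-length Killing field of a non-Riemannian left-invariant Randers metric; and finally the key genericity theorem, proved case by case over the root systems of $A_n$, $D_n$ ($n$ odd), and $E_6$, that any ellipsoid containing the $\mathrm{Ad}(G)$-orbit of a generic $X$ must be centered at $0$, together with the quantitative bound that the non-generic set has codimension at least $\mathrm{rk}(\mathfrak{g})+1$. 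That codimension bound is not decoration: the shift vectors $X'$ range over a subspace of dimension up to $\mathrm{rk}(\mathfrak{g})$, so one needs the exceptional set to be strictly thinner than that in order to conclude that a generic tangent direction $X''$ produces a generic $X=X'+X''$, and then a closedness argument to pass from generic directions to all directions. None of this Weyl-group/ellipsoid analysis, nor the codimension counting, is present or anticipated in your proposal, so the necessity direction remains essentially unproved.
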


 Recall that the indicatrix of a Finsler space $(M, F)$ at a point $x\in M$ is defined to be
 $$\mathcal{I}_x=\{y\in T_x(M)|F(x, y)=1\}.$$
 Notice that in the above theorem, we have identified $\mathfrak{g}$ with $T_eG$. On the other hand,
since $G$ is simple, up to a positive scalar the  bi-invariant Riemannian metric on $G$ is unique. Therefore the statement of the theorem have no independence on the specific bi-invariant metric. The above theorem says that a left invariant Randers metric on a compact simple is CW-homogeneous if and only if it solves the Zermelo's navigation problem of a bi-invariant Riemannian metric under the influence of certain external vector field. Put another way, this theorem presents many examples of non-reversible Finsler spaces which are CW-homogeneous.  It would be an interesting problem to classify CW-homogeneous Finsler spaces, either reversible or non-reversible.

The arrangement of this paper is as the following. In Section 2, we recall some notions and known results on Finsler geometry and CW-translations of Finsler spaces. In Section 3, we study Killing vector fields of left invariant Randers metrics and obtain a complete description of those with constant length. In Section 4, we first state a theorem (Theorem \ref{preparation-theorem}) and then use this theorem to give a proof of Theorem \ref{thmmain}. Finally, in Section 5, we give a case by case proof of Theorem \ref{preparation-theorem}.

\section{Preliminary}
Finsler geometry is introduced by Riemann in his celebrated lecture
on the foundations of geometry,  addressed in 1854. Due to the complexity of the problem, the study of Finsler geometry was dormant for a rather long period.  In 1918,   Finsler studied the variation problem of Finsler spaces in his
doctoral dissertation, which  initiated the systematic study of
Finsler geometry. For fundamental properties of Finsler spaces, we refer the readers to \cite{BCS00}, \cite{CS04} and \cite{SH2}.

\begin{definition}
A Finsler metric on a manifold $M$ is a continuous function
$F:TM\rightarrow \mathbb{R}^+$, which is smooth on the slit
tangent bundle $TM\backslash 0$. In any local standard coordinates
$(x^i,y^j)$ for $TM$, $F$
satisfies the following conditions:

{\rm (1)} $F(x,y)>0$ for any $y\neq 0$.

{\rm (2)} $F(x,\lambda y) =\lambda F(x,y)$ for any $y\in TM_x$, and
$\lambda >0$.

{\rm (3)} The Hessian matrix defined by $g_{ij}=\frac{1}{2}[F^2]_{y^i
y^j}$ is positive definite.
\end{definition}

An important example  is  a Finsler metric with the form $F=\alpha+\beta$, where
$\alpha$ is a Riemannian metric and $\beta$ is a 1-form. This kind of Finsler metrics are called
Randers metrics. Notice that the norm of the $1$-form $\beta$ with respect to the
metric $\alpha$ must  be smaller than $1$ and they reduce to
Riemannian metrics if and only if the $\beta$-terms vanish. Randers metrics are among the
most important examples of non-Riemannian Finsler metrics in the
study of geometry and physics.

Using the integration of $F$ along a path, we can define the arc length of piece-wise smooth curves.  The ``distance" of two points is then
defined to be the infimum of the arc length of all the piece-wise
smooth curves connecting them.  We  call it the distance
function and denote  as $d(\cdot,\cdot)$. In general, the distance function of a Finsler space is not reversible (i.e., $d(x,y)=d(y,x)$ for any $x,y$)
 unless $F(x,y)=F(x,-y)$,  for all $x\in M$ and
$y\in TM_x$.

The notion of CW-translations can be generalized to Finsler geometry; see
\cite{DM12}.

\begin{definition}
A  CW-translation  $\rho$ of
a Finsler manifold $(M,F)$ is an isometry of $(M,F)$ such that
$d(x,\rho(x))$ is a constant function.
\end{definition}

In the Riemannian case, the interrelation between CW-translations
and Killing vector fields of constant lengths is studied by
V.N.Berestovskii and Yu.G.Nikonorov. The key observation is that the
flow curves of a Killing vector field of constant length are
geodesics, so Killing vector fields of constant length can be used
to generate  one-parameter groups of CW-tranlations  when the parameter var1able is
close to $0$. This observation is still valid in the Finslerian case,
except that the Killing vector fields of constant lengths can only
generate CW-translations with positive small parameter variables
when $F$ is not absolutely homogeneous. In fact, we have the following
interrelation theorems (\cite{DM12}).

\begin{theorem}
Let $(M,F)$ be a complete Finsler manifold with a positive
injective radius. If $X$ is a Killing vector field of constant
length, then the flow $\phi_t$ generated by $X$ is a CW-translation
for all sufficiently small $t>0$.
\end{theorem}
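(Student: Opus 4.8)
The plan is to reduce the statement to two facts: the flow of a Killing field consists of isometries (hence preserves the forward distance function), and the integral curves of a constant-length Killing field are geodesics traversed at constant speed $c := F(X)$. Granting these, the conclusion follows at once for small $t > 0$. First I would fix the setup. Since $X$ is Killing and $(M,F)$ is complete, the flow $\phi_t$ is a globally defined one-parameter group of isometries of $(M,F)$, so each $\phi_t$ preserves $F$ and $d$. The constant-length hypothesis means that $p \mapsto F(p, X_p)$ equals a constant $c$ (if $c=0$ then $X=0$ and the statement is trivial, so assume $c>0$). For a fixed $x$, the integral curve $\gamma_x(s) = \phi_s(x)$ has velocity $X(\phi_s(x))$, so $F(\dot\gamma_x(s)) \equiv c$ and the arc length of $\gamma_x|_{[0,t]}$ equals $ct$, independently of $x$; in particular $d(x, \phi_t(x)) \le ct$ for all $x$ and all $t \ge 0$.

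The heart of the argument is the geodesic property: each integral curve $\gamma_x$ is a geodesic. I would establish this through the first variation of arc length, using the Chern connection $\nabla$ with reference vector $X$. The constant-length condition gives $W(F^2(X)) = 0$ for every tangent vector $W$; since the Cartan tensor satisfies $C_X(X, \cdot, \cdot) = 0$ by homogeneity, this collapses to $g_X(\nabla_W X, X) = 0$. The Killing condition gives a Finslerian Killing identity whose symmetric part reads $g_X(\nabla_W X, U) + g_X(\nabla_U X, W) = (\text{Cartan correction terms})$. Specializing to $U = X$, the correction terms drop out because each carries an $X$ in a Cartan-tensor slot, while $g_X(\nabla_W X, X) = 0$ by the previous sentence; what survives is $g_X(\nabla_X X, W) = 0$ for all $W$, whence $\nabla_{\dot\gamma_x}\dot\gamma_x = 0$ and $\gamma_x$ is a geodesic. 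I expect this to be the main obstacle: in the genuinely non-Riemannian case the Killing identity really does contain Cartan-tensor terms absent in Riemannian geometry, and the crux is that the constant-length hypothesis, acting through $C_X(X, \cdot, \cdot) = 0$, is exactly what annihilates them on the directions needed.

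Finally I would assemble the pieces. Because $(M,F)$ has positive injectivity radius $r_0$, any geodesic segment of arc length strictly less than $r_0$ is globally minimizing. Choosing $t > 0$ small enough that $ct < r_0$, the geodesic $\gamma_x|_{[0,t]}$ joining $x$ to $\phi_t(x)$ is minimizing, so $d(x, \phi_t(x)) = ct$. Since the right-hand side is independent of $x$, the isometry $\phi_t$ moves every point the same forward distance $ct$, i.e. $\phi_t$ is a CW-translation. The restriction to $t > 0$ rather than all small $t$ reflects non-reversibility: the forward distance is realized by flowing in the direction of $X$, whose forward norm is $c$, whereas flowing backward would involve $F(-X)$, which need not equal $c$.
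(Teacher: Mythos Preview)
The paper does not actually supply a proof of this theorem: it is quoted from the authors' earlier work \cite{DM12} (``In fact, we have the following interrelation theorems (\cite{DM12})''), so there is no in-paper argument to compare against. Your outline is the standard route and matches what one expects from \cite{DM12}: show that integral curves of a constant-length Killing field are geodesics, then use the positive injectivity radius to get $d(x,\phi_t(x))=ct$ for $0<t<r_0/c$.

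Your sketch is sound. A couple of points you should firm up when writing it out. First, the sentence ``Since $X$ is Killing and $(M,F)$ is complete, the flow $\phi_t$ is globally defined'' deserves one line of justification in the Finsler setting; the cleanest way is to invoke that $I(M,F)$ is a Lie group (Deng--Hou), so a Killing field integrates to a one-parameter subgroup. Second, your handling of the Cartan terms is the right idea but is stated loosely: the Finslerian Killing identity (for the Chern connection with reference $X$) is
\[
g_X(\nabla^X_U X,W)+g_X(\nabla^X_W X,U)+2C_X(\nabla^X_X X,U,W)=0,
\]
and specializing $U=X$ kills the Cartan term via $C_X(\,\cdot\,,X,\,\cdot\,)=0$ and kills the first summand via $g_X(\nabla^X_X X,W)+? $---rather, you combine with $g_X(\nabla^X_W X,X)=0$ from constancy of $F(X)$ to obtain $g_X(\nabla^X_X X,W)=0$ for all $W$. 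Make sure the version of the Killing identity you write down is one you can cite or derive; the exact placement of the Cartan correction depends on conventions, but the mechanism you describe (homogeneity forces $C_X(X,\cdot,\cdot)=0$, which annihilates precisely the extra terms appearing when one slot is $X$) is correct and is indeed the crux in the non-Riemannian case.
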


\begin{theorem}
Let $(M,F)$ be a compact Finsler manifold. Then there is a
$\delta>0$, such that any CW-translation $\rho$ with
$d(x,\rho(x))<\delta$ is generated by a Killing vector field of
constant length.
\end{theorem}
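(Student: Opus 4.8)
The plan is to realize a small-displacement CW-translation as the time-one flow of a Killing field and then to force that field to have constant length. First I would invoke the fact that the full isometry group $G=I(M,F)$ of a compact Finsler manifold is a compact Lie group whose Lie algebra is precisely the space of Killing vector fields, and for which the group exponential $\exp X$ coincides with the time-one map $\phi_1^X$ of the flow of $X$. The displacement functional $\Phi(\rho)=\max_{p\in M}d(p,\rho(p))$ is continuous on $G$ and vanishes only at the identity, so by compactness of $M$ (hence of $G$) the sublevel sets $\{\Phi<\delta\}$ collapse onto the identity as $\delta\to 0$. Choosing $\delta$ small, any CW-translation $\rho$ with $d(x,\rho(x))<\delta$ lies in a neighbourhood of the identity on which $\exp$ is a diffeomorphism, so $\rho=\exp X=\phi_1^X$ for a unique small Killing field $X$; shrinking $\delta$ further I may also assume $\max_p F(X(p))$ lies below the injectivity radius of $(M,F)$.

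Next I would record that $f:=F(X)$ is constant along each flow line of $X$. Indeed $X(\phi_t(p))=(d\phi_t)_p X(p)$ and $\phi_t$ is an isometry, so $F(X(\phi_t(p)))=F(X(p))$. Consequently the flow segment $t\mapsto\phi_t(p)$, $t\in[0,1]$, joining $p$ to $\rho(p)$ has $F$-length $\int_0^1 F(X(\phi_t(p)))\,dt=F(X(p))$, and therefore $d(p,\rho(p))\le F(X(p))$. Since $\rho$ is a CW-translation, the left-hand side is a constant $c$, giving $c\le f(p)$ for all $p$, that is $c\le\min_p f$.

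The heart of the argument is the reverse inequality, which I would extract at the extrema of $f$. Let $p_0$ and $p_1$ be points where $f$ attains its global minimum and maximum. Because $f$ is constant along flow lines, the entire flow line through such an extremal point consists of extrema of $f$; the Finslerian analogue of the Killing identity $\nabla_X X=-\tfrac12\,\mathrm{grad}\,\|X\|^2$ then shows that the acceleration of the flow line vanishes wherever $f$ is critical, so these two flow lines are geodesics (this is the mechanism behind the already-quoted fact that a constant-length Killing field has geodesic flow curves). As their $F$-lengths $f(p_0)$ and $f(p_1)$ lie below the injectivity radius, these geodesic segments are minimizing, whence $f(p_0)=d(p_0,\rho(p_0))=c$ and $f(p_1)=d(p_1,\rho(p_1))=c$. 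Thus $\min_p f=\max_p f=c$, so $X$ has constant length $c$ and generates $\rho$, as required.

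I expect the principal obstacle to be this crux step in the Finsler setting: formulating and justifying the correct analogue of the Riemannian Killing identity, since the Chern connection is only almost-metric and the distance function is non-reversible, so that one must work consistently with the forward flow direction (in keeping with the fact that constant-length Killing fields generate CW-translations only for small positive $t$). A secondary point requiring care is the uniform estimate, furnished by compactness, that a small bound on the displacement $\Phi(\rho)$ forces $\max_p F(X(p))$ to be small enough for the injectivity-radius argument to apply; this is precisely what fixes the threshold $\delta$.
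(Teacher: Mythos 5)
This theorem is stated in the paper as a quotation from \cite{DM12} and is not proved here, so there is no in-paper argument to compare against. Your proposal is essentially the argument of that cited source: realize a CW-translation of small displacement as $\exp X=\phi_1^X$ inside the compact Lie group $I(M,F)$, derive $c\le F(X(p))$ for all $p$ from the length of the flow segment, and obtain the reverse inequality at a global extremum of $f=F(X)$ via the lemma that an integral curve of a Killing field through a critical point of $f$ is a geodesic, hence minimizing once $\delta$ is below the (forward) injectivity radius. That lemma is exactly the Finslerian crux you flag; it is indeed established in \cite{DM12} (using the Killing equation for the fundamental tensor rather than the Riemannian identity $\nabla_XX=-\tfrac12\,\mathrm{grad}\,\|X\|^2$), so your outline is correct and complete modulo that quoted fact.
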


We have also generalized the concept of (restrictively) CW-homogeneous space in Finsler geometry.

\begin{definition}
A Finsler manifold $(M,F)$ is called  CW-homogeneous  if for any two points $x_1,x_2\in M$, there
is a CW-translation $\rho$ such that $\rho(x_1)=x_2$. It is called
restrictively
CW-homogeneous  if for any point $x\in M$, there is a neighborhood
$V$ of $x$, such that for any two points $x_1,x_2\in V$ there is a
CW-translation $\rho$ mapping $x_1$ to $x_2$.
\end{definition}

Notice that the set of CW-translations of a Finsler space is generally not a subgroup of the full group of isometries. Therefore, restrictive CW-homogeneity is a weaker notion than CW-homogeneity.
In the case that $(M,F)$ is a compact Finsler manifold,  the interrelation
between CW-translations and Killing vector fields of constant
length gives an equivalent description of the restrictive
CW-homogeneity, namely, the Finsler space $(M,F)$ is restrictively CW-homogeneous if and only if any
non-vanishing tangent vector can be extended to a Killing vector
field of constant length.

\section{Left invariant Randers metrics  and Killing vector fields of
constant length}

Let $G$ be a connect compact Lie group with Lie algebra
$\mathfrak{g}$. A Finsler metric $F$ on $G$ is called left invariant
if the left translation group $L(G)$ is contained in the isometry
group of $F$. A left invariant Finsler metric is completely   determined by its restriction to
$T_eG\cong \mathfrak{g}$. A left invariant Randers $F=\alpha+\beta$
is then determined by an inner product on $\mathfrak{g}$ and an element of
 $\mathfrak{g}^*$,  which define the $\alpha$ and $\beta$ terms
respectively. Let $\langle\,,\,\rangle$ be the inner product induced by
$\alpha$ and $\langle\,,\,\rangle_{\mathrm{bi}}$ be a fixed $\mathrm{Ad}(G)$-invariant inner
product on $\mathfrak{g}$ induced by a bi-invariant metric on $G$,
then the restriction of $F$ to $T_eG$ can be written as
$F(y)=\sqrt{\langle y,y\rangle}+\langle y,V\rangle_{\mathrm{bi}}$ for some $V\in \mathfrak{g}$.

The Killing vector fields with constant length of a left invariant
Randers metric is the key subject for our considerations. In
\cite{DM12}, we have already discussed some techniques for determining
Killing vector fields $X\in\mathfrak{g}$ with constant length, and found some examples in which the
CW-translations are different from those Riemannian ones.
Any Killing vector field $X$ belongs to the Lie algebra of the full
isometry group of $F$, which contains $\mathfrak{g}$ as a Lie
subalgebra corresponding to the Lie subgroup $L(G)$. Therefore it is important to compute the full isometries of $(G, F)$. In general,  it is rather difficult to get a complete
classification of all the isometry groups of left invariant Randers metrics on a compact Lie group. However, the following
lemma tells us that its identity component  is contained in $L(G)R(G)$ when $G$ is simple (i.e.,
$\mathfrak{g}$ is simple).

\begin{lemma}
Let $G$ be a compact connected simple Lie group and $F$  a left
invariant Randers metric on $G$. Then the identity component $I_0(G,F)$ of the full group $I(M, F)$ of isometries of $(G, F)$ is contained in
$L(G)R(G)$.
\end{lemma}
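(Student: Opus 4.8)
The plan is to reduce the Finsler problem to a Riemannian one and then invoke the structure theory of isometry groups of left invariant metrics. First I would observe that the Randers decomposition $F=\alpha+\beta$ is recovered from $F$ itself by its even and odd parts: for every $x$ and $y$ one has $\alpha(x,y)=\tfrac12\bigl(F(x,y)+F(x,-y)\bigr)$ and $\beta(x,y)=\tfrac12\bigl(F(x,y)-F(x,-y)\bigr)$. Hence any $\phi\in I(G,F)$, which satisfies $F(\phi(x),d\phi_x y)=F(x,y)$ for all $(x,y)$, automatically preserves both $\alpha$ and $\beta$. Consequently $I(G,F)=\{\phi\in I(G,\alpha):\phi^*\beta=\beta\}$ is a closed subgroup of the isometry group of the underlying left invariant Riemannian metric $\alpha$. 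In particular the connected group $I_0(G,F)$ lies in $I_0(G,\alpha)$, so it suffices to prove the lemma with $F$ replaced by $\alpha$.

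Second, I would bring in the structure of $I_0(G,\alpha)$ for a left invariant metric $\alpha$ on the compact simple group $G$. The Lie algebra $\mathfrak{i}$ of $I_0(G,\alpha)$ consists of Killing fields and contains the right invariant fields $\mathfrak{g}^R$, which generate the left translations $L(G)$. Evaluation at $e$, namely $X\mapsto X_e$, maps $\mathfrak{i}$ onto $\mathfrak{g}$ (already $\mathfrak{g}^R$ surjects), with kernel the isotropy algebra $\mathfrak{k}$ of Killing fields vanishing at $e$; each such field is encoded by the skew endomorphism $(\nabla X)_e\in\mathfrak{so}(\mathfrak{g},\langle\cdot,\cdot\rangle)$, and by the Myers--Steenrod theorem an isotropy isometry is determined by this datum. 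The remaining point is to show that the only isotropy isometries are the right translations $R_g$ for which $\mathrm{Ad}(g)$ is $\langle\cdot,\cdot\rangle$-orthogonal, so that $\mathfrak{k}\subseteq\mathfrak{g}^L$. This is exactly the content of the theorem of Ochiai and Takahashi on isometry groups of left invariant metrics, whose proof rests on the simplicity of $\mathfrak{g}$: since $\mathrm{Aut}(\mathfrak{g})_0=\mathrm{Ad}(G)$ for compact simple $\mathfrak{g}$, an isometry fixing $e$ is forced into $\mathrm{Ad}(G)$ and hence realized inside $R(G)$. Granting this, $\mathfrak{i}\subseteq\mathfrak{g}^R+\mathfrak{g}^L$, so $I_0(G,\alpha)\subseteq L(G)R(G)$, and combining with the first step yields $I_0(G,F)\subseteq L(G)R(G)$.

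The main obstacle is the second step: ruling out ``exotic'' isometries fixing $e$ that are not right translations. For a general Lie group this fails (a torus, for instance, has a large orthogonal isotropy group), so the argument must use simplicity in an essential way, through the rigidity $\mathrm{Aut}(\mathfrak{g})_0=\mathrm{Ad}(G)$ and the fact that the isotropy action preserves enough of the algebraic structure of $\mathfrak{g}$ to land in $\mathrm{Ad}(G)$. By contrast, the first, Finsler-to-Riemannian step is routine once the even/odd decomposition is noted; I would present it quickly and devote the real work to citing or adapting the Riemannian structural result to the present normalization $F(y)=\sqrt{\langle y,y\rangle}+\langle y,V\rangle_{\mathrm{bi}}$.
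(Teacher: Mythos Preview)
Your proof is correct and follows essentially the same route as the paper: reduce to the underlying Riemannian metric $\alpha$ via the even/odd decomposition of $F$ (the paper states only the inclusion $I(G,F)\subset I(G,\alpha)$ without elaboration), then invoke the Ochiai--Takahashi theorem that $I_0(G,\alpha)\subset L(G)R(G)$ for compact connected simple $G$. The only difference is that you sketch the reasoning behind the Ochiai--Takahashi result, whereas the paper simply cites it.
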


\begin{proof}
Let $F=\alpha+\beta$ be the standard decomposition for the left
invariant Randers metric. Then we have $L(G)\subset I(G,F)\subset
I(G,\alpha)$, i.e.,  $\alpha$ is a left invariant Riemannian metric. Since $G$
is compact connected simple, $I_0(G,\alpha)$ is contained in
$L(G)R(G)$ (\cite{OT76}). Thus  $I_0(G,F)\subset I_0(G,\alpha)$ is also
contained in $L(G)R(G)$.
\end{proof}

Let $G'$ be the subgroup of $G$ such that $R(G')$ is the connected
component of the group of all isometric right translations, and
denote $\mathrm{Lie}(G')=\mathfrak{g}'$. Then obviously $I_0(G,F)=L(G)R(G')$.
In the Lie algebra level, the Lie algebra of $I_0(G,F)$, i.e., the
space of all Killing vector fields, is the direct sum of
$\mathfrak{g}$ and $\mathfrak{g}'$.

The following theorem gives a criterion  for  a Killing vector field
$(X,X')\in\mathfrak{g}\oplus\mathfrak{g}'$ of the left invariant
Randers metric $F$ to have  constant length.

\begin{theorem}\label{main-theorem-1}
If $(X,X')\in\mathfrak{g}\oplus\mathfrak{g}'$ generates a Killing vector
field of constant length of a left invariant Randers metric $F$,
then either $X=0$ or $X'$ belongs to the center
$\mathfrak{c}(\mathfrak{g}')$ of $\mathfrak{g}'$.
\end{theorem}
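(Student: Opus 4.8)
The plan is to reduce the constant-length condition to a statement about a single adjoint orbit. The Killing vector field generated by $(X,X')$ is the sum of the right-invariant field determined by $X$ (coming from $L(G)$) and the left-invariant field determined by $X'$ (coming from $R(G')$); evaluating it at a point $g$ and transporting back to $T_eG\cong\mathfrak g$ by the isometry $dL_{g^{-1}}$ yields the vector $\mathrm{Ad}(g^{-1})X+X'$. Since $F$ is left invariant, the length of the field at $g$ is exactly $F(\mathrm{Ad}(g^{-1})X+X')$. Hence constant length is equivalent to
$$F(\mathrm{Ad}(h)X+X')=C\qquad\text{for all }h\in G,$$
where I have set $h=g^{-1}$ and $C$ is a constant. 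Writing $F(y)=\sqrt{\langle y,y\rangle}+\langle y,V\rangle_{\mathrm{bi}}$, this is one scalar identity in $h$.

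The key step is to enlarge this family using the right-translation symmetry. Because $R(G')$ consists of isometries, $\mathrm{Ad}(G')$ preserves $F$ on $\mathfrak g$ and, separating the even and odd parts of the identity, preserves $\langle\,,\,\rangle$ and fixes $V$. For $h'\in G'$ I would write $\mathrm{Ad}(h'h)X+X'=\mathrm{Ad}(h')\big(\mathrm{Ad}(h)X+\mathrm{Ad}(h'^{-1})X'\big)$ and combine the $\mathrm{Ad}(h')$-invariance of $F$ with the constant-length identity applied at $h'h$ to obtain
$$F\big(\mathrm{Ad}(h)X+W\big)=C\qquad\text{for all }h\in G,\ W\in\mathrm{Ad}(G')X'.$$
Thus the entire $\mathrm{Ad}(G')$-orbit of $X'$ produces the same constant $C$.

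Now I would fix $h$ and let $W$ range over the orbit $\mathrm{Ad}(G')X'$. Since $\mathrm{Ad}(G')$ preserves $\langle\,,\,\rangle$ and fixes $V$, both $\langle W,W\rangle=\langle X',X'\rangle$ and $\langle W,V\rangle_{\mathrm{bi}}=\langle X',V\rangle_{\mathrm{bi}}$ are independent of $W$; expanding the norm, the only $W$-dependent quantity left in the identity is the cross term $\langle\mathrm{Ad}(h)X,W\rangle$ inside the square root. As the whole expression equals the constant $C$, this cross term must itself be constant along the orbit. Differentiating $\langle\mathrm{Ad}(h)X,\mathrm{Ad}(\exp(sY'))X'\rangle$ at $s=0$ for $Y'\in\mathfrak g'$ then gives $\langle\mathrm{Ad}(h)X,[Y',X']\rangle=0$ for every $h\in G$ and every $Y'\in\mathfrak g'$.

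Finally I would invoke simplicity. If $X\neq0$, the linear span of $\{\mathrm{Ad}(h)X:h\in G\}$ is an $\mathrm{Ad}(G)$-invariant subspace, hence a nonzero ideal of the simple algebra $\mathfrak g$, so it equals $\mathfrak g$. Since $\langle\,,\,\rangle$ is nondegenerate, $\langle z,[Y',X']\rangle=0$ for all $z\in\mathfrak g$ forces $[Y',X']=0$ for every $Y'\in\mathfrak g'$, i.e. $X'\in\mathfrak c(\mathfrak g')$. I expect the main obstacle to be the linear $\beta$-term, which blocks a naive single differentiation from cleanly separating the Riemannian and the one-form contributions; the orbit-enlargement trick of the second step is precisely what circumvents this, reducing the whole problem to the behaviour of one bilinear cross term, and the hypothesis $X\neq0$ is exactly what is needed for the final span argument to fill $\mathfrak g$.
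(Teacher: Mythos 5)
Your proposal is correct and takes essentially the same route as the paper: both arguments reduce the constant-length hypothesis to the constancy of the cross term $\langle \mathrm{Ad}(g)X,\mathrm{Ad}(g')X'\rangle$ over $G\times G'$ (the paper isolates it by polarization after noting $\langle \mathrm{Ad}(g')X',V\rangle_{\mathrm{bi}}$ is constant, you by direct expansion inside the square root after observing that $\mathrm{Ad}(G')$ preserves $\alpha$ and fixes $V$), then differentiate along $G'$ and invoke simplicity of $\mathfrak{g}$ to conclude $[X',\mathfrak{g}']=0$ when $X\neq 0$. The only cosmetic difference is that the paper obtains the two-parameter family from the $\mathrm{Ad}(L(G)R(G'))$-orbit in the homogeneous-space picture, whereas you recover the same family from the pointwise length formula plus $\mathrm{Ad}(G')$-invariance of $F$.
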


\begin{proof}
Let $F(y)=\sqrt{\langle y, y\rangle}+\langle y,V\rangle_{\mathrm{bi}}$ be the restriction of $F$ to
$T_eG$. If $(X,X')\in\mathfrak{g}\oplus\mathfrak{g}'$ generates a
Killing vector field of constant length for $F$, then the projection
of the $\mathrm{Ad}(L(G)R(G'))$-orbit of $(X,X')$ in $\mathfrak{g}$ has the
same $F$ values, i.e.,
\begin{eqnarray}
F(\mathrm{Ad}((g,g'))(X,X'))&=&\alpha(\mathrm{Ad}(g) X - \mathrm{Ad}(g') X') + \langle \mathrm{Ad}(g) X -
\mathrm{Ad}(g') X',V\rangle _{\mathrm{bi}}\nonumber\\
&\equiv& \mbox{constant}, \forall g\in G, g'\in
G'.
\end{eqnarray}
Since $\langle \mathrm{Ad}(g')X',V\rangle _{\mathrm{bi}}=\langle X',V\rangle$, we see that for any fixed $g\in
G$, $\alpha(\mathrm{Ad}(g) X - \mathrm{Ad}(g') X')$ is a constant function of $g'\in
G'$. So
\begin{eqnarray}
\langle \mathrm{Ad}(g) X, \mathrm{Ad}(g') X'\rangle _{\alpha}&=&\frac{1}{2}(\alpha(\mathrm{Ad}(g)
X)+\alpha(\mathrm{Ad}(g')X')-\alpha(\mathrm{Ad}(g) X-\mathrm{Ad}(g') X')) \nonumber \\
&=&\frac{1}{2}(\alpha(\mathrm{Ad}(g) X)+ \alpha(X')-\alpha(\mathrm{Ad}(g) X- \mathrm{Ad}(g')
X'))
\end{eqnarray}
is a constant function of $g'\in G'$, for any fixed $g\in G$. Now Select
$$g'=\exp(t_1 Y_1)\cdots\exp(t_n Y_n).$$
  Taking the derivative with respect to   all the $t_i$'s
and evaluating at $0$, we  easily deduce that for any
$g\in G$, the vector $\mathrm{Ad}(g) X$ is orthogonal to the ideal generated
by $[X',\mathfrak{g}']$ in $\mathfrak{g}'$ with respect to  $\alpha$.
Since the above assertion holds for any  $g\in G$, we  see that the
ideal generated by $X$ in $\mathfrak{g}$ is orthogonal to the ideal
generated by $[X',\mathfrak{g}']$ in $\mathfrak{g}'$. Since
$\mathfrak{g}$ is simple, this implies that either  $X=0$ generates
the $0$ ideal, or $X\neq 0$ and it generates the ideal $\mathfrak{g}$. Notice that in the later case we have
 $[X',\mathfrak{g}']=0$, i.e.,
$X'\in\mathfrak{c}(\mathfrak{g'})$. This completes the proof of the theorem.
\end{proof}

%

Using a similar argument we can prove the following modification of Theorem
\ref{main-theorem-1}, which allows more general $G'$.

\begin{theorem}\label{main-theorem-2}
Let $G$ be a compact connected simple Lie group, and $G'$ a closed
connected subgroup of $G$ with Lie algebra $\mathfrak{g}'$. Let $F$
be a $L(G)R(G')$-invariant Randers metric on $G$. If
$(X,X')\in\mathfrak{g}\oplus\mathfrak{g}'$ generates  a Killing vector
field of constant length, then either $X=0$ or $X'$ belongs to the
center $\mathfrak{c}(\mathfrak{g}')$ of $\mathfrak{g}'$.
\end{theorem}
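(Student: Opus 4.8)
The plan is to mirror the proof of Theorem \ref{main-theorem-1}, since the two statements have essentially identical conclusions; the only difference is that in Theorem \ref{main-theorem-2} the subgroup $G'$ is an arbitrary closed connected subgroup of $G$ rather than the specific $G'$ arising as the isometric right-translation group of a left invariant metric. The key observation is that the proof of Theorem \ref{main-theorem-1} never used any special property of $G'$ beyond its being a closed connected subgroup and the metric $F$ being invariant under $L(G)R(G')$. Hence the entire computation carries over verbatim once we verify the invariance hypothesis we need.

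\emph{First} I would write the restriction of $F$ to $T_eG$ as $F(y)=\sqrt{\langle y,y\rangle}+\langle y,V\rangle_{\mathrm{bi}}$ exactly as before. If $(X,X')\in\mathfrak{g}\oplus\mathfrak{g}'$ generates a Killing vector field of constant length, then, since $F$ is $L(G)R(G')$-invariant and the Killing field is tangent to the orbit of the isometry group $L(G)R(G')$ through the identity, the $F$-value of the projection of the $\mathrm{Ad}(L(G)R(G'))$-orbit of $(X,X')$ into $\mathfrak{g}$ must be constant. Concretely, $F(\mathrm{Ad}((g,g'))(X,X'))=\alpha(\mathrm{Ad}(g)X-\mathrm{Ad}(g')X')+\langle \mathrm{Ad}(g)X-\mathrm{Ad}(g')X',V\rangle_{\mathrm{bi}}$ is constant for all $g\in G$, $g'\in G'$. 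This is the exact analogue of the displayed equation in the previous proof, and the crucial input is precisely the $L(G)R(G')$-invariance that is now assumed as a hypothesis rather than derived.

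\emph{Next} I would carry out the same two reductions. Because $\langle\,,\,\rangle_{\mathrm{bi}}$ is $\mathrm{Ad}(G)$-invariant we have $\langle \mathrm{Ad}(g')X',V\rangle_{\mathrm{bi}}=\langle X',V\rangle_{\mathrm{bi}}$, so the $\beta$-contribution from $g'$ is constant and we conclude that $\alpha(\mathrm{Ad}(g)X-\mathrm{Ad}(g')X')$ is a constant function of $g'\in G'$ for each fixed $g$. Polarizing the norm as in the previous proof shows that $\langle \mathrm{Ad}(g)X,\mathrm{Ad}(g')X'\rangle_\alpha$ is constant in $g'$. Writing $g'=\exp(t_1Y_1)\cdots\exp(t_nY_n)$ with $Y_i\in\mathfrak{g}'$, differentiating in each $t_i$ and evaluating at the identity, we deduce that $\mathrm{Ad}(g)X$ is $\alpha$-orthogonal to the ideal of $\mathfrak{g}'$ generated by $[X',\mathfrak{g}']$. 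Letting $g$ range over all of $G$, the $\mathrm{Ad}(G)$-invariant subspace spanned by the $\mathrm{Ad}(g)X$ is the ideal of $\mathfrak{g}$ generated by $X$, so that ideal is $\alpha$-orthogonal to the ideal generated by $[X',\mathfrak{g}']$ in $\mathfrak{g}'$.

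\emph{Finally} I would invoke the simplicity of $\mathfrak{g}$: the ideal of $\mathfrak{g}$ generated by $X$ is either $0$ (when $X=0$) or all of $\mathfrak{g}$. In the latter case the orthogonality forces the ideal generated by $[X',\mathfrak{g}']$ to be trivial, i.e. $[X',\mathfrak{g}']=0$, which says exactly that $X'\in\mathfrak{c}(\mathfrak{g}')$. \textbf{The main point to be careful about} is the justification that constancy of the $F$-value along the $\mathrm{Ad}(L(G)R(G'))$-orbit really follows from the constant-length Killing property for this more general $G'$; since the argument of Theorem \ref{main-theorem-1} used only that $L(G)R(G')$ acts by isometries and that $(X,X')$ lies in its Lie algebra, and both remain true under the stated hypotheses, no new difficulty arises and the proof goes through unchanged. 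I would therefore simply note that the proof is identical to that of Theorem \ref{main-theorem-1}, pointing out that the sole place where the original argument appealed to the structure of the isometry-induced $G'$ was to guarantee $L(G)R(G')$-invariance of $F$, which is now assumed directly.
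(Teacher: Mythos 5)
Your proposal is correct and matches the paper exactly: the paper gives no separate proof of Theorem \ref{main-theorem-2}, stating only that it follows by ``a similar argument'' to Theorem \ref{main-theorem-1}, and your verification that the earlier proof uses nothing about $G'$ beyond closedness, connectedness, and the $L(G)R(G')$-invariance of $F$ (now a hypothesis) is precisely the intended justification.
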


In Theorem \ref{main-theorem-2}, we treat $(G,F)$ as the homogeneous
Randers space, in which is viewed as the coset space $G=(L(G)R(G'))/H$ and $H$ can be identified
with the quotient of $G'$ by a discrete normal subgroup. The Lie
algebra $\mathfrak{h}$ of $H$ is isomorphic to $\mathfrak{g}'$ and
it is a ``diagonal'' in $\mathfrak{g}\oplus\mathfrak{g}'$. The
projection from $(X,X')\in\mathfrak{g}\oplus\mathfrak{g}'$ to
$\mathfrak{m}\cong\mathfrak{g}$ is just $X-X'$.

For simplicity, we assume that $(X,X')\in\mathfrak{g}\oplus
\mathfrak{c}(\mathfrak{g}')$, with $X\neq 0$, generates a Killing vector
field of constant length $1$ for the $L(G)R(G')$-invariant Randers
metric $F$ on $G$. Then the projection from the orbit to
$\mathfrak{g}$ is just a shift by $-X'$. Its projection in
$\mathfrak{g}$ is contained in an indicatrix ellipsoid $S$ for $F$
if and only if the $\mathrm{Ad}(G)$-orbit of $X$ is contained in another
ellipsoid $S'$,  which is the indicatrix for another left invariant
Randers metric $F'$. Then $X\in\mathfrak{g}$ generates  a Killing vector
field of constant length $1$ of $F'$, and $X'$ must be inside the
indicatrix ellipsoid of $F'$. The Randers metric $F$ is invariant
under the right multiplications in $G'$ if and only its indicatrix $S$
in $\mathfrak{g}$ is $\mathrm{Ad}(G')$-invariant. By Theorem
\ref{main-theorem-2}, the shifting from $S$ to $S'$ by $X'$ is
$\mathrm{Ad}(G')$-invariant, so $F'$ is also $L(G)R(G')$-invariant. The
correspondence from $F'$ to $F$ is similar. We have thus proved the
following theorem.

\begin{theorem}\label{main-theorem-3}
Let $G$ be a compact Lie group with simple Lie algebra
$\mathfrak{g}$, $G'$ a closed connected subgroup of $G$ with Lie
algebra $\mathfrak{g}'$. Then for any
$(X,X')\in\mathfrak{g}\oplus\mathfrak{c}(\mathfrak{g}')$, $X\neq 0$,
and $l>0$, there is an one-to-one correspondence between the
following two sets:
\begin{description}
\item{\rm (1)}\quad The set of all $L(G)R(G')$-invariant metrics such that $(X,X')$
generates a Killing vector field of constant length $l$.
\item{\rm (2)}\quad  The set of all $L(G)R(G')$-invariant metrics such that $X$ generates
a Killing vector field of constant length $l$ and the length of $X'$
with respect to  the Randers metric is less than $l$.
\end{description}
\end{theorem}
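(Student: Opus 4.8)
The plan is to establish the bijection by making the "shift" correspondence described in the paragraph preceding the theorem fully explicit and then checking that it respects both the constant-length condition and the $L(G)R(G')$-invariance. I would set up the correspondence as follows. Given $(X,X')$ with $X\neq 0$ and $X'\in\mathfrak{c}(\mathfrak{g}')$, a Killing vector field of constant length $l$ for an $L(G)R(G')$-invariant metric $F$ means, by the computation in the proof of Theorem~\ref{main-theorem-1}, that the projection to $\mathfrak{m}\cong\mathfrak{g}$ of the $\mathrm{Ad}(L(G)R(G'))$-orbit of $(X,X')$ has constant $F$-value $l$. Since $X'$ is central in $\mathfrak{g}'$, the factor $\mathrm{Ad}(g')X'=X'$ is fixed, so this orbit projects to $\{\mathrm{Ad}(g)X - X' : g\in G\}$, which is simply the $\mathrm{Ad}(G)$-orbit of $X$ translated by the constant vector $-X'$. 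Thus $F$ is constant equal to $l$ on the translated orbit if and only if the \emph{translated} Finsler functional $F'(y):=F(y-X')$ is constant equal to $l$ on the $\mathrm{Ad}(G)$-orbit of $X$ itself.

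\textbf{The key steps}, in order, are these. First I would define $F'$ by translating the indicatrix: if $S=\{y:F(y)=l\}$ is the indicatrix $l$-level set of $F$, let $S'=S-X'=\{y-X':y\in S\}$ and let $F'$ be the (scaled) Minkowski functional whose $l$-level set is $S'$. One must verify that $S'$ is again a genuine Randers indicatrix, i.e. a translated ellipsoid whose offset keeps the origin strictly inside; this is exactly where the hypothesis that the length of $X'$ with respect to $F$ is less than $l$ enters, since that condition is precisely what guarantees $0$ lies in the interior of $S'$ and hence that $F'$ is a legitimate Randers metric. Second, I would check the constant-length equivalence: $(X,X')$ has constant length $l$ for $F$ iff $\mathrm{Ad}(g)X\in S'$ for all $g$, iff $X$ has constant length $l$ for $F'$, which follows directly from the translation identity $F(\mathrm{Ad}(g)X-X')=F'(\mathrm{Ad}(g)X)$. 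Third, I would verify that invariance is preserved in both directions: by Theorem~\ref{main-theorem-2} the constant-length condition forces the relevant orbit data to be $\mathrm{Ad}(G')$-invariant, and since $X'$ is $\mathrm{Ad}(G')$-fixed, translating by $X'$ commutes with the $\mathrm{Ad}(G')$-action, so $S$ is $\mathrm{Ad}(G')$-invariant iff $S'$ is; hence $F$ is $L(G)R(G')$-invariant iff $F'$ is. Finally, the two assignments $F\mapsto F'$ and $F'\mapsto F$ are mutually inverse translations of the indicatrix (by $-X'$ and $+X'$ respectively), which gives the claimed one-to-one correspondence.

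\textbf{The main obstacle} I anticipate is the verification in the first step that the translated level set $S'$ remains a valid Randers indicatrix, together with pinning down the exact role of the inequality on the length of $X'$. A Randers indicatrix is an ellipsoid with the origin in its interior but not at its center; translating by $X'$ moves the center, and one must confirm both that $0$ stays strictly interior (this is the strong convexity / admissibility condition ensuring $F'$ satisfies Definition conditions (1)--(3)) and that the resulting offset ellipsoid corresponds to an $\alpha'+\beta'$ decomposition with $\|\beta'\|_{\alpha'}<1$. Making the equivalence "length of $X'$ less than $l$" $\Longleftrightarrow$ "$0\in\mathrm{int}(S')$" precise, and checking it is symmetric under the inverse translation, is the technical heart of the argument; the invariance bookkeeping and the constant-length equivalence are then comparatively routine consequences of $X'$ being central and the linearity of translation.
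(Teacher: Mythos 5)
Your proposal is correct and follows essentially the same route as the paper: the paper likewise obtains $F'$ by translating the indicatrix of $F$ by the central (hence $\mathrm{Ad}(G')$-fixed) vector $X'$, identifies admissibility of the translated ellipsoid with the condition that $X'$ be shorter than $l$, and concludes by noting the translation is invertible. The only caveat is a sign inconsistency between your definition $F'(y)=F(y-X')$ and your claim $S'=S-X'$ (the $l$-level set of that $F'$ is $S+X'$), but this bookkeeping slip does not affect the argument.
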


Theorem \ref{main-theorem-3} provides a theoretical  machinery to
find left invariant Randers metrics with von-vanishing Killing
vector fields of constant length. First we fix a nonzero $X\in \mathfrak{g}$ and find the
left invariant Randers metrics such that $X$ generates a Killing vector
field of constant length for this metric (as we did in the previous work
\cite{DM12}). If its isometry group is the product of $L(G)R(G')$ as
given above, then we can freely choose any $X'$ from
$\mathfrak{c}(\mathfrak{g}')$ which is shorter than $X$. In this way we
get the a Randers metric by requiring  its indicatrix to be  the parallel
shifting of the former one by $-X'$. Then it is easily seen that the above Randers
metric has  a Killing vector field of constant length generated by
$(X,X')$.

Besides the Killing vector fields generated by $(X,X')\in\mathfrak{g}\oplus
\mathfrak{c}(\mathfrak{g})$ with $X\neq 0$, there are Killing vector
fields of constant lengths generated by elements of  the form $(0,X')$. It is easy to see that in this case
$X'$ can be any vector in $\mathfrak{g}'$. Since the subgroup $R(G')\in
I(G,F)$ commute with $L(G)$, and $L(G)$ acts transitively on $G$,
$R(G')$ gives a group of CW-translations which is neither new nor
interesting for our consideration (see \cite{DM12}).

\section{Left invariant CW-homogeneous Randers metrics on simple compact Lie groups}
We now apply  Theorem \ref{main-theorem-3} to give a proof of  Theorem \ref{thmmain}.
The proof contains two steps. We first  prove the statement of the theorem with
CW-homogeneity replaced by restrictive CW-homogeneity. Then we
prove that, for a left invariant Randers metric on a connected simply connected compact simple Lie group, the  CW-homogeneity and restrictive homogeneity are equivalent.

The first step of the proof needs the following theorem, which will
be proved in the next section.

\begin{theorem}\label{preparation-theorem}
Let $G$ be a compact connected simple Lie group with Lie algebra
$\mathfrak{g}$. Then a generic $X\in\mathfrak{g}$ can not generate a
Killing vector field of constant length when the  left
invariant Randers metric $F$ on $G$ is not Riemannian. The
complement of all those generic elements  is a subvariety with a
codimension at least $\mathrm{rk}(\mathfrak{g})+1$.
\end{theorem}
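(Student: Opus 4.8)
The plan is to translate the constant-length condition into the vanishing of a single quadratic polynomial on an adjoint orbit, and then to exploit the symmetry of that orbit.

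First I would fix the criterion. With $F(y)=\sqrt{\langle y,y\rangle}+\langle y,V\rangle_{\mathrm{bi}}$ and $V\neq 0$, a nonzero $X\in\mathfrak{g}$ generates a Killing vector field of constant length exactly when $g\mapsto F(\mathrm{Ad}(g)X)$ is constant, i.e. when $F$ is constant on the adjoint orbit $\mathcal{O}_X=\mathrm{Ad}(G)X$; this is the computation already carried out in the proof of Theorem \ref{main-theorem-1} with $X'=0$. Hence the exceptional set $\mathcal{C}=\{X\in\mathfrak{g}:F|_{\mathcal{O}_X}\text{ is constant}\}$ is an $\mathrm{Ad}(G)$-invariant cone, and it suffices to estimate the dimension of the $W$-invariant cone $\mathcal{C}\cap\mathfrak{t}$ in a fixed maximal abelian subalgebra $\mathfrak{t}$, since $\dim\mathcal{C}=\dim(\mathcal{C}\cap\mathfrak{t})+\dim\mathcal{O}$ for a generic orbit $\mathcal{O}\subset\mathcal{C}$. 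Let $P$ be the positive symmetric operator with $\langle u,v\rangle=\langle Pu,v\rangle_{\mathrm{bi}}$. Because $\langle y,y\rangle_{\mathrm{bi}}$ is constant on $\mathcal{O}_X$, squaring $\sqrt{\langle y,y\rangle}=c-\langle y,V\rangle_{\mathrm{bi}}$ (where $c=F(X)>0$) and homogenizing shows that $X\in\mathcal{C}$ is equivalent to the vanishing on $\mathcal{O}_X$ of the single inhomogeneous quadratic
\[
Q(y)=\langle Ay,y\rangle_{\mathrm{bi}}+2c\langle y,V\rangle_{\mathrm{bi}},\qquad Ay=Py-\tfrac{c^{2}}{\langle X,X\rangle_{\mathrm{bi}}}\,y-\langle y,V\rangle_{\mathrm{bi}}V.
\]

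The decisive observation is a parity argument. Whenever $-1$ belongs to the Weyl group $W$ — which holds for every simple type except $A_n\ (n\ge 2)$, $D_{2k+1}$ and $E_6$ — one has $\mathcal{O}_X=-\mathcal{O}_X$ for every $X$. Evaluating $Q$ at $y$ and at $-y$ and subtracting kills the quadratic part and leaves $c\langle y,V\rangle_{\mathrm{bi}}=0$ for all $y\in\mathcal{O}_X$. Since $c>0$ and the orbit of any nonzero element of a simple Lie algebra linearly spans $\mathfrak{g}$, this forces $V=0$, contradicting the assumption that $F$ is non-Riemannian. Thus for all these types $\mathcal{C}=\{0\}$, and the codimension bound holds with enormous room to spare.

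For the three remaining families ($-1\notin W$) the parity trick is unavailable, and this is where the real work lies. Here I would differentiate $F(\mathrm{Ad}(g)X)$ to obtain the pointwise gradient condition $\alpha(y)^{-1}Py+V\in\mathfrak{z}(y)$ for every $y\in\mathcal{O}_X$, where $\mathfrak{z}(y)$ denotes the centralizer, and feed it into the dimension bookkeeping for the invariant cone $\mathcal{C}$. One must first show that $\mathcal{C}$ contains no regular element — for a regular $y$ the centralizer is the Cartan subalgebra through $y$, so the fixed vector $V$ would have to lie in the moving affine subspace $\mathfrak{z}(y)-\alpha(y)^{-1}Py$ simultaneously for all $y$ in a positive-dimensional orbit — and then bound the dimension of $\mathcal{C}\cap\mathfrak{t}$ on each singular stratum so that $\dim\mathcal{O}+\dim(\mathcal{C}\cap\mathfrak{t})\le\dim\mathfrak{g}-\mathrm{rk}(\mathfrak{g})-1$. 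Both steps require the explicit description of singular adjoint orbits (for $A_{n-1}$, through the eigenvalue multiplicities of skew-Hermitian matrices, and analogously for $D_{2k+1}$ and $E_6$). This stratum-by-stratum count, with no symmetry available to short-circuit it, is the main obstacle, and it is precisely the case-by-case analysis postponed to Section 5.
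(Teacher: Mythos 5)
Your reduction is sound and, for the types with $-\mathrm{Id}$ in the Weyl group, your parity argument is essentially the paper's Lemma on that case in a slightly slicker form: the paper compares the ellipsoid equations $x^TAx+b^Tx+c=0$ and $x^TAx-b^Tx+c=0$ and invokes irreducibility of the Weyl representation to kill the linear term, while you subtract $Q(y)-Q(-y)$ and use that an adjoint orbit of a nonzero element spans the simple algebra. Both are correct and equivalent in substance.

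The genuine gap is that for $A_n$ $(n\ge 2)$, $D_{2k+1}$ and $E_6$ you have only stated a plan, not a proof, and these three families are the entire content of the theorem --- everything else is the easy half. Concretely, you are missing: (i) a workable criterion forcing the center of an ellipsoid containing a Weyl orbit to be $0$ when $-\mathrm{Id}\notin W$ (the paper's Lemma \ref{observation-lemma}: midpoints of chords parallel to a fixed direction lie in a hyperplane, so if for each root $v_i$ the orbit supplies $n$ points whose projections form an affine basis of $v_i^\perp$, the center lies in $\bigcap_i v_i^\perp=\{0\}$); (ii) an explicit identification of the generic stratum in each case (for $A_n$: at least two eigenvalues of multiplicity one; for $D_{2k+1}$ and $E_6$: analogous multiplicity conditions) together with the exhibition of the required Weyl-orbit points; and (iii) the codimension count via isotropy groups $S(U(n_1)\times\cdots\times U(n_m))$, etc., showing the non-generic locus has codimension at least $\mathrm{rk}(\mathfrak{g})+1$ --- note that showing only that regular elements are generic is insufficient, since the singular locus has codimension $3$, far below $n+1$. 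Your proposed route via the gradient condition $\alpha(y)^{-1}Py+V\in\mathfrak{z}(y)$ is a legitimate necessary condition, but the claimed contradiction for regular orbits ("$V$ would have to lie in a moving affine subspace") is not yet an argument, and nothing at all is offered for the singular strata that actually carry the codimension bound. As written, the hard cases are deferred rather than proved.
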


We remark here the generic condition is some condition for the eigenvalue multiplicities when $\mathfrak{g}$ is realized as a Lie algebra of matrices, and the exact sense may depend on the explicit Lie algebras; see the interruption in Section 5 for the Lie algebras of $A_n$, $\geq 1$, $D_n$ with $n$ odd and $>2$, and $E_6$. Hence apparently the generic condition has not been precisely defined. However, what is important in the above theorem is the assertion on the codimension of the set of generic elements. In fact, in the proof of Theorem \ref{preparation-theorem}, we will only use the generic condition in the above mentioned three cases, in which the condition can be precisely described.

\medskip
\noindent\textbf{Proof of Theorem \ref{thmmain}.}\quad
Let $I_0(G,F)$ be the product $L(G)R(G')$, with $G'\subset G$ and
$R(G')$ the connected subgroup of all right translation isometries.
Denote the Lie algebra of  $I_0(G,F)$ by
$\mathfrak{g}\oplus\mathfrak{g'}$.
If the indicatrix of the $L(G)R(G')$-invariant metric $F$ in
$T_eG \cong\mathfrak{g}$ is a round sphere,
centered at $-X'$, with radius $r>0$ with respect to  the bi-invariant metric, then $X'$ is
$\mathrm{Ad}(G')$-invariant, i.e., $X'$ lies in $ \mathfrak{c}(\mathfrak{g})$. Since the
indicatrix of $F$ contains $0$,  the length of $X'$ with respect to
the bi-invariant metric satisfies $|X'|_{\mathrm{bi}}<r$. Any vector
$(X,X')$, with $|X|_{bi}=r$, generates a Killing vector field of
constant length, and  these vectors exhaust all tangent directions in $T_eG$.
Through left translations, the Killing vector fields of constant
lengths can exhaust all tangent directions at any point. Thus $(G,F)$
is restrictively CW-homogeneous.

 Conversely,
if $(G,F)$ is restrictively CW-homogeneous, then any tangent vector
$X''\in \mathfrak{g}\cong T_eG$, of  length $1$ with respect to $F$ can be
extended to a Killing vector field of constant length $1$. Such  a
Killing vector field $(X,X')$ has either of the following two forms:
\begin{description}
\item{(1)}\quad $X\in\mathfrak{g}$, $X'\in\mathfrak{c}(\mathfrak{g}')$, and
$X=X'+X''$;
\item{\rm (2)}\quad $(0,X')$, with $X'=-X''\in\mathfrak{g'}$.
\end{description}
 Notice
that if $F$ is Riemannian, then it must be a bi-invariant metric
(\cite{DM12}). Now suppose $F$ is not Riemannian and write $F$ as
$F(y)=\sqrt{\langle y,y\rangle}+\langle y,V \rangle_{\mathrm{bi}}$, where $y\in T_eG$ and
$V$ is a non-vanishing vector in $\mathfrak{g}$.  Then $[X',V]=[X'',V]=0$ for a Killing vector
field $(X', X'')$ of constant length with the second form. In this case $X''$ is
contained in a subspace with a lower dimension. So when $X''$ is
generic, the corresponding Killing vector field $(X,X')$ is of the
first form, i.e.,  $X'\in\mathfrak{c}(\mathfrak{g}')$. Theorem
\ref{main-theorem-3} implies that $X$ generates a Killing vector field of
constant length for another $L(G)R(G')$-invariant Randers metric
$F'$, whose indicatrix is just a shift of that of $F$. Notice that
all the possible $X'$'s are contained in $\mathfrak{c}(\mathfrak{g}')$
which has a dimension at most $\mathrm{rk}(\mathfrak{g})$. For generic $X''$,
the corresponding $X=X'+X''$ must be generic in the sense of Theorem
\ref{preparation-theorem}, otherwise $X$ belongs to a subvariety
with a codimension at least $rk(\mathfrak{g})+1$, and this implies that
$X''=X-X'$ belongs to subvariety with a codimension at least $1$,
which conflict with the assumption that $X''$ is generic. So the corresponding $F'$
for $X$ must be Riemannian, or equivalently, its indicatrix must be
the only shifting of that of $F$ with its center shifted back to $0$. Thus for a
generic vector $X''$ with  length $1$ for $F$, the corresponding
Killing vector field $(X,X')$ of constant length  has the same $X'$
factor, and the corresponding $F'$ for the $X$ term is also the
same. As the set of   the Killing vector fields with a fixed constant length is a closed subset of the set of all the Killing vector fields, the above assertion is true for all $X''$'s with  length $1$ for
$F$. So $F'$ is a restrictively CW-homogeneous Riemannian metric, i.e.,
it is a bi-invariant Riemannian metric. Consequently the indicatrix of $F'$ in $\mathfrak{g}$ is a round
sphere. Hence  its shifting, the indicatrix of $F$,  is also a round
sphere.

Up to now we have completed the first step of the proof. For the second step we only need to  prove that for a left invariant Randers metric on a connected simply connected simple Lie group, the restrictive CW-homogeneity  implies the
CW-homogeneity.

By suitable scalar changes, we can assume that $\{(X,V)| |X|_{\mathrm{bi}}=1\}$
generates all the Killing vector fields of constant length $1$ with respect to $F$, in
which the fixed $V$ satisfies $|V|_{\mathrm{bi}}<1$. Then after a constant
re-scaling of the parameter, any geodesic of $(G,F)$ starting from
$g$ can be written as $\exp (tX) g \exp (-tV)$ with $t\geq 0$, for
some $X\in\mathfrak{g}$ with $|X|_{\mathrm{bi}}=1$. The  geodesic $\exp (tX) g
\exp (-tV)$, $t\in [0,t_0]$ from $g$ to $g'$ is not minimizing if
and only if there is another geodesic from $g$ to $g'$ with the form
$\exp (tX') g \exp (-tV)$, $|X'|_{\mathrm{bi}}=1$ and $t\in[0,t']$ with
$t'<t_0$. This implies that
\begin{equation}\label{geo}
\exp(t_0 X) g \exp (-t_0 V) = \exp (t' X') g \exp (- t' V),
\end{equation}
i.e., $\exp(t_0 X) = \exp (t' X') \exp((t_0-t')\mathrm{Ad}(g) V)$. For the
bi-invariant metric, the geodesic $\exp (tX)$, $t\in [0,t_0]$ from
the unit element $e$ to $g'g^{-1}$ has a length $t_0$. The right
side of (\ref{geo}) gives a path from  $e$  to $\exp (t' X') \exp((t_0-t')\mathrm{Ad} (g)
V)$, which is a geodesic from
$e$ to $\exp((t_0-t') V)$ with length $(t_0-t')|V|_{bi}$,  and a
geodesic from $\exp((t_0-t')V)$ to $\exp (t' X') \exp((t_0-t')\mathrm{Ad}(g)
V)$ with length $t'$. The total length is less than $t_0$, so the
geodesic $\exp (tX)$, $t\in [0,t_0]$ is not minimizing with respect to the
bi-invariant metric. The next lemma shows the converse statement is
also true.

\begin{lemma}\label{lemgeo}
Let $X$ be a unit vector and $\exp(tX)$,
$t\in [0,t_0]$, a geodesic from $e$ to $g=\exp(t_0 X)$
which
 is not minimizing with respect to the bi-invariant
 metric. Then for any $V\in\mathfrak{g}$ with $|V|_{bi}<1$, there is
 $t'\in [0,t_0)$ and a unit vector $X'\in \mathfrak{g}$, such that
 \begin{equation}\label{aaa}
 g=\exp(t_0 X)= \exp (t' X') \exp ((t_0-t')V).
 \end{equation}
\end{lemma}

\begin{proof}
We construct a sequence $t_n\in [0,t_0]$, and a sequence of unit
vectors $X_n\in \mathfrak{g}$ with respect to the bi-invariant metric,
inductively as follows. Let $t_1$ and $X_1$ be the pair such that
the geodesic $\exp (t X_1)$, $t\in [0,t_1]$, is minimizing from $e$
to $g$. Suppose  we have defined $t_i$ and $X_i$. Then we choose $t_{i+1}$ and
$X_{i+1}$ to be the pair such that the geodesic $\exp (t X_{i+1})$,
$t\in [0,t_{i+1}]$, is the shortest from $e$ to $g
\exp((t_i-t_0)V)$.  We have a sequence of equalities
$g=\exp(t_{i+1} X_{i+1})\exp((t_0-t_i)V)$, $\forall i\geq 0$, which imply that
$\exp(t_{i+1} X_{i+1})\exp(-t_i X_i)=\exp((t_{i}-t_{i-1})V)$. Using
the triangle inequality, we have $|t_{i+1}-t_i|\leq |V|_{\mathrm{bi}}
|t_i-t_{i-1}|$. So the sequence $t_n$ converges to some $t'\geq 0$.
Using  a suitable subsequence if necessary, we can assume that $X_n$  converge to some unit vector
$X'$ satisfying (\ref{aaa}). Since all the geodesics $\exp(t
X_n)$, $t\in [0,t_n]$, are minimizing, the limit geodesic $\exp (t
X')$, $t\in[0,t']$ is also a minimizing geodesic from $e$ to
$g\exp((t'-t_0)V)$. If $t'>t_0$, then the path given by the geodesic
from $e$ to $\exp((t'-t_0)V)$ and the geodesic from
$\exp((t'-t_0)V)$ to $g\exp((t'-t_0)V)=\exp(t_0 X)\exp((t'-t_0)V)$
has a length $t_0+|V|_{eq}(t'-t_0)<t'$, which is a
contradiction. On the other hand,    the condition
that $\exp(tX_0)$, $t\in [0,t_0]$, is not minimizing, implies that $t'\ne t_0$. Therefore $t'\in
[0,t_0)$. This completes the proof of the lemma.
\end{proof}

For any $g_1\neq g_2$, we can find $X\in\mathfrak{g}$ with
$|X|_{\mathrm{bi}}=1$, such that $(X,V)$ is the Killing vector field of
constant length $1$, and its flow curve, $\exp(tX)g_1\exp(-tV)$,
$t\in[0,t_0]$,  generates a minimizing geodesic from $g_1$ to $g_2$
for the Randers metric $F$. Denote the local one-parameter group of diffeomorphisms generated by
$(X,V)$ by $\phi_t$. Given $t'\in [0,t_0]$, if $\phi_{t'}$ is not
a Clifford-Wolf translation, then there is some $g'\in G$, such that
the geodesic $\exp(tX) g'$, $t\in [0,t']$,  is not minimizing. Then with respect to the
bi-invariant metric, the geodesic $\exp(tX)$, $t\in [0,t']$, does not minimizes the distance
 from $e$ to $\exp(t'X)$. By Lemma \ref{lemgeo}, there is
$X''$ with $|X''|_{\mathrm{bi}}=1$ and $t''\in[0,t')$, such that
$\exp(t'X)=\exp(t''X'')\exp((t'-t'')\mathrm{Ad}(g_1) V)$. This means that the
geodesic $\exp(tX'')g_1\exp(-tV)$, $t\in[0,t'']$,  gives a
path with length $t''$. It is shorter than the geodesic $\exp(tX)g_1\exp(-tV)$,
$t\in [0,t']$, from $g_1$ to $\exp(t' X)g_1\exp(-t'V)$. This is a
contradiction with the fact that for $t\in[0,t_0]$, $(X,V)$ generates a minimizing geodesic
from $g_1$ to $g_2$. Therefore $(G, F)$ is
CW-homogeneous. This completes  the
proof of Theorem \ref{thmmain}.


\section{Proof of Theorem \ref{preparation-theorem}}
\subsection{The theme of the proof}
Let $G$ be a compact connected simple Lie group with Lie algebra
$\mathfrak{g}$, $\mathfrak{h}$  a Cartan subalgebra of
$\mathfrak{g}$ with
$\dim_{\mathbb{R}}\mathfrak{h}=\mathrm{rk}(\mathfrak{g})=n$, and $W$  the corresponding
Weyl group. The bi-invariant metric on $G$ induces a $W$-invariant
linear metric on $\mathfrak{h}$. The condition in Theorem
\ref{preparation-theorem}, that the vector field generated by $X\in\mathfrak{g}$   is not a
Killing vector field of constant length of the  left
invariant Randers metric $F$ on $G$ when $F$  is not Riemannian,  is equivalent to the condition that
 any ellipsoid containing the $\mathrm{Ad}(G)$-orbit $X$
must be centered at $0$.

If $-\mbox{Id}$ is contained in the Weyl group, the proof goes
easily by applying the next lemma.

\begin{lemma}
If $-\mbox{Id}$ belongs to the Weyl group of a simple compact Lie
algebra $\mathfrak{g}$, then
any ellipsoid containing the $\mathrm{Ad}(G)$-orbit of a non-zero element $X\in\mathfrak{g}$ must be centered at $0$.
\end{lemma}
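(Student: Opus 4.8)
The plan is to parametrize the ellipsoid by its defining positive-definite quadratic and then exploit the central symmetry of the orbit that is forced by the hypothesis $-\mathrm{Id}\in W$. First I would write any ellipsoid in $\mathfrak{g}$ containing the orbit in the centered form
$$E=\{y\in\mathfrak{g}\ :\ \langle A(y-c),\,y-c\rangle_{\mathrm{bi}}=1\},$$
where $c\in\mathfrak{g}$ is the center and $A$ is the positive-definite self-adjoint operator (with respect to $\langle\,,\,\rangle_{\mathrm{bi}}$) representing its defining quadratic form. Every ellipsoid admits such a presentation after completing the square, and the entire content of the lemma is to show that necessarily $c=0$.

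The key observation is that $-\mathrm{Id}\in W$ makes the orbit $\mathrm{Ad}(G)\cdot X$ symmetric under $Y\mapsto -Y$. Realizing the Weyl group action by conjugation, I may move $X$ into the Cartan subalgebra $\mathfrak{h}$; then $-\mathrm{Id}\in W$ yields $-X\in W\cdot X\subset \mathrm{Ad}(G)\cdot X$, so $-X=\mathrm{Ad}(h)X$ for some $h\in G$. Consequently, for an arbitrary orbit point $Y=\mathrm{Ad}(g)X$ one has $-Y=\mathrm{Ad}(g)(-X)=\mathrm{Ad}(gh)X$, which again lies in the orbit. Thus $Y$ lies on $\mathrm{Ad}(G)\cdot X$ if and only if $-Y$ does.

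I would then extract the center from this symmetry. Since both $Y$ and $-Y$ lie on $E$, subtracting $\langle A(Y-c),Y-c\rangle_{\mathrm{bi}}=1$ from $\langle A(-Y-c),-Y-c\rangle_{\mathrm{bi}}=1$ and using that $A$ is self-adjoint cancels the quadratic and constant terms, leaving $\langle AY,c\rangle_{\mathrm{bi}}=0$ for every $Y$ in the orbit. Because $\mathfrak{g}$ is simple, its adjoint representation is irreducible, so the $\mathrm{Ad}(G)$-invariant linear span of the nonzero orbit $\mathrm{Ad}(G)\cdot X$ is all of $\mathfrak{g}$. Hence $\langle AY,c\rangle_{\mathrm{bi}}=0$ for all $Y\in\mathfrak{g}$, that is $Ac=0$, and since $A$ is positive-definite (in particular invertible) this forces $c=0$.

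The computation itself is routine once the set-up is fixed; the only points needing care are recording that an arbitrary ellipsoid admits the centered presentation above with $A$ self-adjoint and positive-definite, and invoking the irreducibility of the adjoint representation of a simple compact Lie algebra so that a single nonzero orbit spans $\mathfrak{g}$. I expect no serious obstacle here: the substance of the argument is the one-line symmetry remark that $-\mathrm{Id}\in W$ renders the orbit centrally symmetric, after which the vanishing of the linear part of the defining quadratic, and hence $c=0$, is automatic.
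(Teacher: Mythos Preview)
Your proof is correct. The core idea---central symmetry of the orbit forced by $-\mathrm{Id}\in W$, followed by an irreducibility argument to kill the linear term---is exactly what the paper uses. The only difference is where the argument is run: the paper first restricts to a Cartan subalgebra $\mathfrak{h}$, considers the Weyl group orbit of $X$ inside $\mathfrak{h}$, and invokes irreducibility of the $W$-action on $\mathfrak{h}$; you instead stay in all of $\mathfrak{g}$, use the full $\mathrm{Ad}(G)$-orbit, and invoke irreducibility of the adjoint representation. Your route is marginally cleaner in that it proves the lemma exactly as stated (for ellipsoids in $\mathfrak{g}$) without an intermediate reduction step, while the paper's version is tailored to what is actually used later, namely that the restriction of the indicatrix to each Cartan subalgebra is centered at $0$. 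Either way the substance is the same one-line symmetry observation plus irreducibility.
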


\begin{proof}
We only need to prove that for any non-zero $X$ in a Cartan
subalgebra, an ellipsoid containing its Weyl group orbit must be
centered at $0$. Denote the equation of the ellipsoid $E$ containing the
Weyl group orbit of $X\neq 0$  as
\begin{equation}
x^T A x + b^T x +c = 0.
\end{equation}
Then the ellipsoid defined by the equation
\begin{equation}
x^T A x - b^T x + c=0
\end{equation}
also contains the Weyl group orbit of $X$. So the Weyl group orbit
of $X$ is contained in the subspace $b^T x = 0$ if $b\neq 0$, or
equivalently, the ellipsoid $E$ is not centered at $0$. This conflicts
with the fact that the representation of the Weyl group is
irreducible on the Cartan subalgebra of a simple compact
$\mathfrak{g}$.
\end{proof}

It is well known that, the Weyl groups of all simple compact Lie algebras except $A_n$ with $n>1$, $D_n$ with
$n$ odd, and $E_6$,   contain the endomorphism
$-\mbox{Id}$ on the Cartan subalgebras. For the last three cases, we
need another lemma.

Let $\{v_1,\ldots,v_n\}$ be a  basis of the vector space $\mathfrak{h}$ given
by roots, with corresponding reflections $\rho_1,\ldots,\rho_n$ in
the Weyl group. Denote the linear subspace spanned  by $\rho_i$  as
$v_i^{\perp}$. Then we have

\begin{lemma}\label{observation-lemma}
Suppose for any $i=1,\ldots,n$, we can find $n$ points on the
Weyl group orbit of $X$ outside $v_i^\perp$, such that their orthogonal
projections in $v_i^{\perp}$ form an affine basis of $v_i^{\perp}$.
Then any ellipsoid containing the Weyl group orbit of $X$ must be
centered at the origin.
\end{lemma}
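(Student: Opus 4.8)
The plan is to show that the linear part of the ellipsoid's defining equation is forced to vanish, which is equivalent to the center being at the origin. I would write an ellipsoid containing the Weyl group orbit of $X$ as
\[
q(x)+\langle b,x\rangle_{\mathrm{bi}}+c=0,
\]
where $q$ is a positive-definite quadratic form with associated symmetric bilinear form $B$, $b\in\mathfrak{h}$, $c\in\mathbb{R}$, and $\langle\cdot,\cdot\rangle_{\mathrm{bi}}$ is the $W$-invariant inner product on $\mathfrak{h}$ induced by the bi-invariant metric. Completing the square shows that the ellipsoid is centred at the origin if and only if $b=0$. Since $\{v_1,\dots,v_n\}$ is a basis of $\mathfrak{h}$, it suffices to prove $\langle b,v_i\rangle_{\mathrm{bi}}=0$ for every $i$.

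Fix $i$. The reflection $\rho_i\in W$ is $\langle\cdot,\cdot\rangle_{\mathrm{bi}}$-orthogonal, fixes $v_i^\perp$ pointwise, and negates $v_i$; moreover it permutes the orbit $W\cdot X$, so for each orbit point $p$ both $p$ and $\rho_i(p)$ satisfy the ellipsoid equation. Writing $f$ for the left-hand side, I would form the difference $f(p)-f(\rho_i p)$. Decomposing $p=u+tv_i$ with $u$ the orthogonal projection of $p$ onto $v_i^\perp$ and $t\neq 0$ (since $p\notin v_i^\perp$), the self-terms of $q$ and the constant $c$ cancel, leaving
\[
f(p)-f(\rho_i p)=2t\bigl(2B(u,v_i)+\langle b,v_i\rangle_{\mathrm{bi}}\bigr)=0.
\]
Because $t\neq 0$, this yields the \emph{affine} identity $2B(u,v_i)+\langle b,v_i\rangle_{\mathrm{bi}}=0$ for the projection $u$ of every orbit point lying off $v_i^\perp$. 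This is the key step: reflection symmetry collapses the quadratic constraint into a single affine functional $\Phi(u)=2B(u,v_i)+\langle b,v_i\rangle_{\mathrm{bi}}$ on the hyperplane $v_i^\perp$.

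Now I would invoke the hypothesis: the $n$ orbit points chosen for this $i$ have projections $u_1,\dots,u_n$ forming an affine basis of the $(n-1)$-dimensional space $v_i^\perp$, and $\Phi(u_j)=0$ for each $j$. An affine function on an $(n-1)$-dimensional affine space that vanishes on an affine basis vanishes identically; in particular $\Phi(0)=\langle b,v_i\rangle_{\mathrm{bi}}=0$, since $0\in v_i^\perp$. Letting $i$ range over $1,\dots,n$ gives $b=0$, so the ellipsoid is centred at the origin.

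The computations are elementary once the subtraction is set up, so the only real content is the reduction from a quadratic to an affine condition together with the observation that the affine-basis hypothesis supplies exactly the $n$ conditions needed to annihilate $\Phi$. I expect no serious obstacle beyond being careful that the chosen orbit points lie strictly off $v_i^\perp$ (so that $t\neq 0$ and the division is legitimate), which is precisely what the hypothesis guarantees.
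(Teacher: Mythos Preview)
Your argument is correct and is essentially an algebraic unpacking of the paper's geometric proof. The paper phrases things via the classical fact that the midpoints of all chords of an ellipsoid parallel to a fixed direction lie on a hyperplane through the centre (the diametral hyperplane conjugate to that direction); it then observes that the midpoints of the segments $[p,\rho_i p]$ are exactly the projections $u_j$, so the diametral hyperplane for direction $v_i$ contains an affine basis of $v_i^\perp$ and hence equals $v_i^\perp$, forcing the centre into $\bigcap_i v_i^\perp=\{0\}$. Your affine functional $\Phi(u)=2B(u,v_i)+\langle b,v_i\rangle_{\mathrm{bi}}$ is precisely the equation of that diametral hyperplane, and your subtraction $f(p)-f(\rho_i p)$ is what derives it from scratch; so your route is a bit more self-contained, while the paper's is shorter for readers who already know the conjugate-diameter fact.
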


\begin{proof}
Let $E$ be the ellipsoid containing the Weyl group orbit of $X$. Suppose $l$ is
a line which has two intersectional points with $E$ and denote by $L$ the set of all the lines which is parallel to $l$. Define
$D$ to be the set of the middle points of the intersectional points of the lines in $L$ with $E$.
Then $D$  is
contained in a hyperplane. Notice that  the center of the ellipsoid must be contained in this
hyperplane. For each $i=1,\ldots,n$, the hyperplane containing all
middle points with the lines parallel to $v_i$ contains a set of $n$
projections from the orbit of $X$, which gives an affine basis. Thus
it is identical with $v_i^{\perp}$. Since the $v_i^{\perp}$'s have the
unique common point (the origin), the center of the ellipsoid  must  be the origin.
\end{proof}

The generic condition for $X$ are certain conditions for the
eigenvalue multiplicities for the cases of $A_n$ and $D_n$, and more
generally the isomorphic type of its isotropy group for $E_6$. They
are invariant under $\mathrm{Ad}$-actions, namely, if $X$ is generic, then each element of the $\mathrm{Ad}$-orbit of $X$ is also generic. If a generic element $X$ generates a Killing vector field
of constant length for some Randers metric $F$, then the restriction of $F$  to
each Cartan subalgebra must be Riemannian, since  the indicatrix
ellipsoid is centered at $0$. This implies that  $F$ must be Riemannian. Therefore we are left  to calculate the codimension of the complement of the
set of all generic elements in $\mathfrak{g}$. Next we deal with this problem
case by case.

\subsection{The case of $A_n$ with $n>1$}\label{case of An}
The diagonal matrices in $\mathfrak{su}(n+1)$ with $n>0$ form a Cartan
subalgebra. Any matrix in it can be identified with a vector
$X=(a_0,\ldots,a_n)\in\mathbb{R}^{n+1}$, in which
$\sum_{i=0}^{n}a_i=0$, and the eigenvalues are $a_i\sqrt{-1}$,
$i=0,\ldots,n$. The weyl group is the full permutation group $S_{n+1}$
for all the entries of $X$.

Take $v_i=(1,0,\ldots,0,-1,0,\ldots,0)$ with $-1$ in the
$i$th-entry. The reflection $\rho_i$ is the permutation which interchanges  $a_0$ and
$a_i$, and fixes all the other entries.

Let $X$ be a vector with the first entry equal to $a$, the last entry equal to $b$ such that $b<a$ and suppose all  the other entries
$c_i$ lies in $\mathbb{R}\backslash\{a,b\}$, $i=1,\ldots,n-1$. Then using the action of  $\rho_i$, $i=0,1,\ldots,n$,  we get the following $n$ points in the Weyl group
orbit of $X$:
\begin{eqnarray*}
  X_0&=& (a,b,c_1,\ldots,c_{n-1}),\\
   X_i&=&(a,c_i,c_1,
\ldots,c_{i-1},b, c_{i+1},\ldots,c_{n-1}),
\end{eqnarray*}
 for $i=1,\ldots,n-1$.
It is easy to see that neither of the above points is  contained in $v_1^{\perp}$.
To see that their orthogonal projections form an affine basis for
$v_1^\perp$, we only need to notice the fact that the set of  vectors
\begin{equation}
\mathrm{pr}(X_i)-pr(X_0)=((c_i-b)/2,0,\ldots,0,c_i-b,0,\ldots,0), \forall
i=1,\ldots,n-1,
\end{equation}
are linearly independent, which is obvious. The same technique can also be applied to
other $v_i^\perp$ with  $i>1$. Therefore  the condition
of Lemma \ref{observation-lemma} is satisfied. Hence the ellipsoid
containing this $X$ must be centered at $0$.  Thus the left-invariant
Randers metric with Killing vector fields of constant length given
by this $X$ must be Riemannian on each Cartan subalgebra.
Therefore $F$ is Riemannian.

Any matrix in $\mathfrak{su}(n+1)$ have $n+1$ imaginary eigenvalues, with their
multiplicities denoted as $\{n_1,\ldots,n_m\}$, $\sum_{i=1}^m
n_i=n+1$. The generic condition discussed above is in fact that
there are two $1$'s among all $n_i$'s. The matrix space $\mathfrak{su}(n+1)$ can
be naturally stratified as a finite union of subvarieties with
respect to the multiplicities. Notice that the isotropy group for
the ${\mathrm Ad}(SU(n))$-action at $X\in \mathfrak{su}(n+1)$ with eigenvalue
multiplicities $\{n_1,\ldots,n_m\}$ is $S(U(n_1)\times\cdots\times
U(n_m))$. So the set of the  matrices with the same eigenvalue multiplicities
$\{n_1,\ldots,n_m\}$ is a subvariety with codimension
\begin{equation}\label{codim-formula-1}
\sum_{i=1}^{m}n_i^2-m = \sum_{i=1}^m (n_i+1)(n_i-1)\geq n+m-2l,
\end{equation}
in which $l$ is the number of $1$'s among the $n_i$'s. Obviously,  when
$m\geq 3$ and $l\leq 1$, the codimension is $n+m-2l\geq n+1$, and when
$m=2$ and $l\leq 1$, it is also easy to see that the inequality in
(\ref{codim-formula-1}) is restrict. Thus each stratified subset in the
complement of the set of  the generic elements has a codimension at
least $n+1$, and  so does their union. This completes the proof of
Theorem \ref{preparation-theorem} for the case of $A_n$.

\subsection{The case of $D_n$ with odd $n>2$}
In a Cartan subalgebra of $\mathfrak{so}(2n)$ with $n>2$, the matrix can be
identified with a vector $X=(a_1,\ldots,a_n)\in\mathbb{R}^n$, such
that the eigenvalues of the matrix are $\pm a_i\sqrt{-1}$ for
$i=1,\ldots,n$. The Weyl group actions permute the entries and
change even numbers of signs of the entries arbitrarily. The
eigenvalue multiplicities for a matrix in $\mathfrak{so}(2n)$ can be denoted
as $\{n_0,n_1,\ldots,n_m\}$,  where $n_0$ is the even multiplicity
of the eigenvalue $0$, and the $n_i$'s are eigenvalue multiplicities
for positive multiples of $\sqrt{-1}$. Then we have $n_0+2\sum_{i=1}^m
n_i=2n$. Suppose  the eigenvalue
multiplicities $X\in\mathbb{R}^n$   satisfies the conditions that $m\geq 2$ and there is an $i>0$ such that $n_i=1$.
After a suitable change of the order and adjustment of  the signs, we can suppose that $X$ can be represented as
 $(a,b,c_1,\ldots,c_n)$ such that $a$ and $b$ are non-zero, and
$a$ has different absolute value from the others. If there is a $c_i$ such that $c_i=b$,
then we change the sign for that $c_i$ and $a$ simultaneously using a Weyl
group element. Therefore we can  assume further that $b\neq c_i$, for $i=1,\ldots,n-2$.
Let $v_i=(1,0,\ldots,-1,0,\ldots,0)$ be the root with a $-1$ in the
$(i+1)$-th entry, and denote $v_0=(1,1,0,\ldots,0)$. One can find the following $n$ points from the Weyl group
orbit of $X$ outside $v_1^\perp$:
 $(a,b,c_1,\ldots,c_{n-2})$,
$(a,c_i,c_1,\ldots,c_{i-1},b,c_{i+1},\ldots,c_{n-2})$ with
$i=1,\ldots,n-2$, and $(-a,-b,c_1,\ldots,c_{n-2})$. Neither of the $n$
points is  contained in $v_1^\perp$, and their orthogonal
projections in $v_1^\perp$ form an affine basis. This argument can also applied to
$v_0$ and the other $v_i$'s. Thus  $X$ is a
generic vector in the sense of Lemma \ref{observation-lemma}.

In this case the generic condition for $X$ can be stated as follows: the eigenvalue multiplicities
$\{n_0,\ldots,n_m\}$ of $X$ satisfy the condition that $m\geq 2$,  and there is a $n_i=1$
for some $i>0$. The isotropy group of the $\mathrm{Ad}(\mathrm{SO}(2n))$-action at $X$
with eigenvalue multiplicities $(n_0,\ldots,n_m)$ is isomorphic to
$\mathrm{SO}(n_0)\times \mathrm{U}(n_1)\times \mathrm{U}(n_m)$. Therefore   the subvariety of  the elements $X$ whose
eigenvalue multiplicities are all the same,  has  codimension
$n_0(n_0-1)/2+\sum_{i=1}^m n_i^2 - m$. If $m=1$ and $n_1=1$, then the
codimension equals $(n-1)(2n-3)>n+1$ when $n>2$. If for all $i>0$ we have  $n_i>1$, then the codimension is
\begin{eqnarray}
\frac{n_0(n_0-1)}{2}+\sum_{i=1}^m n_i^2 - m& = &
\frac{n_0(n_0-1)}{2} +
\sum_{i=1}^m (n_i-1)(n_i+1)\nonumber \\
& \geq & n_0-1 + \sum_{i=1}^m n_i + m \nonumber \\
& = & n+(m+\frac{n_0}{2}-1)\geq n
\end{eqnarray}
in which the equality holds only when $n_0/2+m=1$. However, one  can easily
check that the equality can not hold. So in the complement of the set of the generic elements,
 any stratified subvariety of the fixed type of eigenvalue
multiplicities has a codimension at least $n+1$. This completes the
proof of Theorem \ref{preparation-theorem} for the case of $D_n$.
%
%
%
%

\subsection{The case of $E_6$}

Now we consider the last case of  $E_6$, which is also the most difficult one.
The Cartan subalgebra of $E_6$ can be identified with
$\mathbb{R}^6$. The root system consists of $\alpha=(\pm 1,\pm
1,0,0,0,0)$ together with all permutations of $\alpha$  keeping the last entry $0$ fixed,
and all the vectors $(\pm 1/2,\pm 1/2, \pm 1/2, \pm 1/2$, $\pm 1/2,
\pm \sqrt{3}/2)$ with odd positive signs. It is easy to observe
that the set $\Pi$  of roots which are perpendicular to $(1/2,\ldots,1/2,-\sqrt{3}/2)$ consists of
 the permutations of $(1,-1,0,0,0,0)$,  together with the permutations of  the vectors $\pm(1/2,\ldots$,
$1/2,-1/2,\sqrt{3}/2)$, both keeping the last entry fixed. It is also easily checked that the set   set $\Pi$ forms
the root system of $A_5$. We can use an orthogonal automorphism of
$\mathbb{R}^6$ to map them to the standard root system of $A_5$,
i.e., all permutations of $(1,-1,0,0,0,0)$. More precisely, the
orthogonal automorphism keeps all permutations of $(1,-1,0,0,0,0)$
fixing the last entry $0$, and maps $(-1/2,\ldots,-1/2,1/2,-\sqrt{3}/2)$
to $(0,\ldots,0,1,-1)$. We can also assume that it maps the roots
$\pm(1/2,1/2,1/2,1/2,1/2, -\sqrt{3}/2)$ to
$\pm(3^{-1/2},\ldots,3^{-1/2})$, respectively. It is not hard to
check that it maps the remaining forty roots to $\pm((3^{-1/2}\pm
1)/2,\ldots,(3^{-1/2}\pm 1)/2)$ with three positive signs and three
negative signs among the six entries. To summarize, we have the
following lemma.

\begin{lemma}
The root system of $E_6$ can be represented as the union of the
standard root system of $A_5\times A_1$, i.e., all permutations of
$(1,-1,0,\ldots,0)$ and $\pm(3^{-1/2},\ldots,3^{-1/2})$, and all the vectors
$\pm((3^{-1/2}\pm 1)/2,\ldots,(3^{-1/2}\pm 1)/2)$ with three
positive signs and three negative signs among the six entries.
\end{lemma}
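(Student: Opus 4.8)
\quad
The plan is to realize the asserted identification explicitly through the orthogonal automorphism $\sigma$ of $\mathbb{R}^6$ already singled out above, and then to track where each of the $72$ roots of $E_6$ is sent. Recall that $\sigma$ fixes pointwise the $4$-dimensional subspace $U=\{x\,:\,x_6=0,\ \sum_{i=1}^5 x_i=0\}$ spanned by the roots that are permutations of $(1,-1,0,0,0,0)$ with vanishing last entry, while on the orthogonal complement $U^\perp=\mathrm{span}\{(1,1,1,1,1,0),e_6\}$ it is pinned down by the two normalizations $w\mapsto u$ and $(-\tfrac12,\dots,-\tfrac12,\tfrac12,-\tfrac{\sqrt3}2)\mapsto(0,\dots,0,1,-1)$, where $w=(\tfrac12,\dots,\tfrac12,-\tfrac{\sqrt3}2)$ and $u=(3^{-1/2},\dots,3^{-1/2})$. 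Since $\sigma$ is orthogonal it carries the root system of $E_6$ onto a congruent root system, so the entire content of the lemma is the computation of the image coordinates.

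First I would sort the $72$ roots by the value of $\langle\,\cdot\,,w\rangle$. Because $|w|^2=2$ and $E_6$ is simply laced, this pairing takes only the values $0,\pm1,\pm2$ on roots. The value $0$ singles out the $30$ roots of the subsystem $\Pi$, which is of type $A_5$; the value $\pm2$ occurs exactly for the pair $\pm w$, an $A_1$ orthogonal to $\Pi$; and the remaining $40$ roots are precisely those with $\langle\,\cdot\,,w\rangle=\pm1$. By the construction of $\sigma$ the image $\sigma(\Pi)$ is the standard $A_5$ consisting of all permutations of $(1,-1,0,\dots,0)$, and $\sigma(\pm w)=\pm u$; one notes that $u$ is orthogonal to this standard $A_5$, so together these images supply exactly the $A_5\times A_1$ part of the list.

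The heart of the proof is the remaining $40$ roots. The key simplification is that $\sigma$ commutes with the group $S_5$ permuting the first five coordinates (which fixes both $(1,1,1,1,1,0)$ and $e_6$, hence acts trivially on $U^\perp$ and by permutations on $U$) and with $-\mathrm{id}$ (by linearity). Under $S_5\times\{\pm1\}$ the $40$ roots split into just two orbits, with representatives the root $(1,1,0,0,0,0)$ of type $(\pm1,\pm1,0^4)$ and the half-integer root $(\tfrac12,\tfrac12,-\tfrac12,-\tfrac12,-\tfrac12,\tfrac{\sqrt3}2)$. I would solve the two $2\times2$ linear systems in the basis $\{(1,1,1,1,1,0),e_6\}$ to write $\sigma$ explicitly on $U^\perp$, decompose each representative as its (fixed) $U$-part plus its $U^\perp$-part, apply $\sigma$, and recombine. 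A direct computation then sends the first representative to a vector whose entries equal $(3^{-1/2}+1)/2$ in three positions and $(3^{-1/2}-1)/2$ in the other three, and the second to the negative of such a vector; both therefore lie among the stated $\pm((3^{-1/2}\pm1)/2,\dots)$ with exactly three ``$+$'' and three ``$-$'' entries. Equivariance propagates these to all $40$ images, and since $\sigma$ is injective while the source and the target sets each have $40$ elements, $\sigma$ restricts to a bijection onto the $40$ vectors in the statement, completing the identification.

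The step I expect to be the only real nuisance is the explicit determination of $\sigma$ on $U^\perp$ and the bookkeeping of the $\sqrt3$-terms when recombining the fixed and moving parts; as a running consistency check one verifies that every image has squared norm $2$, which indeed holds for all the listed vectors. An alternative, automorphism-free route would be to verify directly that the $72$ vectors in the statement all have squared length $2$ and that the Cartan pairings $2\langle\alpha,\beta\rangle/\langle\beta,\beta\rangle$ are integral, equivalently that the reflections in these vectors permute the set; a reduced simply-laced root system of rank $6$ with $72$ roots can only be $E_6$, which would reprove the lemma without reference to $\sigma$.
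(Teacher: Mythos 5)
Your proposal is correct and follows essentially the same route as the paper: the paper likewise constructs the orthogonal automorphism fixing pointwise the $A_5$ subsystem perpendicular to $(1/2,\ldots,1/2,-\sqrt{3}/2)$, sending $\pm(1/2,\ldots,1/2,-\sqrt{3}/2)$ to $\pm(3^{-1/2},\ldots,3^{-1/2})$ and $(-1/2,\ldots,-1/2,1/2,-\sqrt{3}/2)$ to $(0,\ldots,0,1,-1)$, and then checks where the remaining forty roots land. Your use of $S_5\times\{\pm 1\}$-equivariance to cut that check down to two orbit representatives (and the closing remark on an automorphism-free verification) is a tidy organizational refinement, but the underlying argument is the same.
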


For any vector $X\in\mathbb{R}^6$, or the Cartan subalgebra of
$E_6$, the multiplicities of its different entries can be denoted as
a set of positive integers $\{n_1,\ldots,n_m\}$, with $\sum_{i=1}^m
n_m=6$. For simplicity, we will call it the multiplicity type of
$X$. Suppose  $X$ is a vector in the Cartan subalgebra whose
 all  entries sum to $0$, i.e.,   a vector in the Cartan
subalgebra of $A_5$. From Subsection \ref{case of An}, we see that  if the multiplicity type of $X$ contains two
$1$'s, then any ellipsoid in the Cartan subalgebra of $A_5$
containing the $A_5$-Weyl group orbit of $X$ is centered at $0$. In fact
it is true for more generic elements.

\begin{lemma}\label{51}
If $X$ has three different entries and the sum  of all its entries is equal to $0$,
then any ellipsoid in the Cartan subalgebra of $A_5$ which contains
the $A_5$-Weyl group orbit of $X$ must be centered at 0.
\end{lemma}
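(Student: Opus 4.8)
The plan is to deduce Lemma \ref{51} from Lemma \ref{observation-lemma} by checking the latter's hypothesis for the $A_5$-Weyl group orbit of $X$. The Weyl group of $A_5$ is the symmetric group $S_6$ permuting the six coordinates and acting on the hyperplane $\{\sum_{i=1}^6 x_i=0\}$, and each simple root has the form $e_i-e_{i+1}$. Since the orbit is $S_6$-invariant and all these roots are $S_6$-conjugate to $v=e_1-e_2$, it suffices to verify the condition for $v$ alone. The reflection across $v^\perp$ interchanges the first two coordinates, and the orthogonal projection onto $v^\perp$ sends a point $p$ to $((p_1+p_2)/2,\,(p_1+p_2)/2,\,p_3,p_4,p_5,p_6)$; here $\dim v^\perp=4$, so I must produce five orbit points with $p_1\neq p_2$ whose projections are affinely independent.

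The key device I would use groups the orbit points by the value $t=(p_1+p_2)/2$. If I fix an unordered pair $\{x,y\}$ of distinct values occurring in $X$ and place them in the first two slots, then $t=(x+y)/2$ is fixed while the remaining four entries range over all $S_4$-arrangements (in slots $3,\dots,6$) of a fixed multiset $\mu$. Whenever $\mu$ is not constant, interchanging two unequal entries produces difference vectors proportional to $e_i-e_j$, and these span the sum-zero subspace; hence the $S_4$-orbit of $\mu$ affinely spans the full three-dimensional hyperplane $\{x_3+\cdots+x_6=\textrm{(sum of }\mu)\}$. Consequently the projections with this fixed $t$ fill, affinely, the entire three-dimensional slice $\{x_1=x_2=t\}\cap v^\perp$, yielding four affinely independent points; a single additional orbit point whose pair has a different average $t'$ then supplies the missing fourth direction, giving the required five affinely independent projections.

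It then remains to confirm that, whenever $X$ has at least three distinct entries, one can always choose such a pair with non-constant complementary multiset together with a second pair of different average. First I would note that the case in which some value has multiplicity one is already covered by the argument of Subsection \ref{case of An}. The genuinely new cases are the multiplicity types $\{2,2,2\}$ and $\{3,2,1\}$, of which $\{2,2,2\}$, namely $X=(a,a,b,b,c,c)$ with $a+b+c=0$ and $a,b,c$ distinct, is the hard part: here no coordinate has multiplicity one, so the earlier reasoning does not apply. For this type, taking the pair $\{a,b\}$ leaves $\mu=\{a,b,c,c\}$, which is non-constant, while the pair $\{a,c\}$ gives a different average (the three averages $-a/2,-b/2,-c/2$ are pairwise distinct since $a,b,c$ are), so the construction goes through; the type $\{3,2,1\}$ and the remaining borderline configurations are disposed of by an equally short choice of pair, which I would record in a brief case check. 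Once the hypothesis of Lemma \ref{observation-lemma} is confirmed for $v=e_1-e_2$, the $S_6$-symmetry propagates it to every simple root, and since the subspaces $v_i^\perp$ meet only at the origin, the center of any ellipsoid containing the orbit is forced to be $0$.
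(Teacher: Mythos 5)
Your proposal is correct and follows essentially the same route as the paper: both verify the hypothesis of Lemma \ref{observation-lemma} root by root by exhibiting five orbit points off $v^\perp$ with affinely independent projections (the paper simply lists such points explicitly for the two multiplicity types $\{3,2,1\}$ and $\{2,2,2\}$, whereas you generate them by fixing the pair in the first two slots, letting $S_4$ rearrange the complementary non-constant multiset, and adding one point with a different pair-average). One small slip: the argument of Subsection \ref{case of An} requires \emph{two} values of multiplicity one, so it does not by itself cover the type $\{3,2,1\}$ as your opening remark suggests, but this is harmless since your main device handles that type directly (the pair $\{a,b\}$ leaves the non-constant multiset $\{b,c,c,c\}$).
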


\begin{proof}
We only need to consider the case with multiplicity types
$\{1,2,3\}$ and $\{2,2,2\}$ and repeat the arguments in Subsection
\ref{case of An}.

Consider $X=(a,b,b,c,c,c)$ where $a$, $b$ and $c$ are three
different numbers with $a+2b+3c=0$. Let the roots
$v_i=(1,0,\ldots,0,-1,0,\ldots,0)$ with a $-1$ in the $(i+1)$-th
entry, $i=1,\ldots,5$. For $v_1=(1,-1,0,\ldots,0)$, the $A_5$-Weyl
group orbit of $X$ contains the points $(a,b,b,c,c,c)$, $(c,b,b,a,c,c)$,
$(c,b,b,c,a,c)$, $(c,b,b,c,c,a)$ and $(a,b,c,b,c,c)$. They are
vectors outside $v_1^\perp$ and their orthogonal projections form an
affine basis for $v_1^\perp$. For other $v_i$'s, the arguments are
similar. By Lemma \ref{observation-lemma}, any ellipsoid containing
the $A_5$-Weyl group orbit of $X$ is centered at $0$.

Consider $X=(a,a,b,b,c,c)$ where $a$, $b$ and $c$ are three
different numbers with $a+b+c=0$. Then for $v_1$, the $A_5$-Weyl
group orbit of $X$ contains the points $(a,b,a,b,c,c)$, $(a,b,b,a,c,c)$,
$(a,b,a,c,b,c)$, $(a,b,a,c,c,b)$ and $(a,c,a,b,c,b)$. They are
vectors outside $v_1^\perp$ and their projections form an affine
basis for $v_1^\perp$. Similar arguments  also apply  to other $v_i$'s. This  completes
 the proof of the lemma.
\end{proof}

Finally, we prove that   any ellipsoid containing
the $E_6$-Weyl group orbit of $X$ must be centered at $0$, provided that  $X$ has three different entries.

First we consider the case that the sum of all entries of $X$ is not
$0$. Let $E$ be any ellipsoid containing the $A_5\times A_1$-Weyl
group orbit of $X$. Notice that the Cartan subalgebra of the $A_1$
factor is generated by $v_0=(3^{-1/2},\ldots,3^{-1/2})$, and the
Cartan subalgebra of $A_5$ is the orthogonal complement $v_0^\perp$.
The $A_5\times A_1$-Weyl group orbit of $X$ has no intersection with
$v_0^{\perp}$, and Lemma \ref{51} implies that their orthogonal
projection to $v_0^\perp$ contains an affine basis for $v_0^\perp$.
So the middle points of the intersection between $E$ and the lines
parallel to $v_0$ are contained in $v_0^\perp$. Similarly, the
centers of the intersection ellipsoid between $E$ and the
hyperplanes parallel to $v_0^\perp$ is contained in $\mathbb{R}v_0$.
Since the center of $E$ must be contained in both $v_0^\perp$ and $\mathbb{R}v_0$, it must be $0$.

Next we consider the case that the sum of all entries of $X$ is $0$.
Let $E$ be any ellipsoid containing the $E_6$-Weyl group orbit of
$X$. Then Lemma \ref{51} implies that the intersection of  $E$ with
$v_0^\perp$ is an ellipsoid containing the $A_5$-Weyl group orbit of
$X$, which is centered at $0$. On the other hand,  any element $\rho$ in the
$E_6$-Weyl group maps $v_0^\perp$ to another subspace. Thus
the intersection between $E$ and $\rho(v_0^\perp)$ is also an
ellipsoid containing the $\rho$-images of the $A_5$-Weyl group orbit
of $X$, which is also centered at $0$. Therefore the center of $E$ must be $0$,
since it is contained in both the lines generated by $v_0$ and
$\rho(v_0)$ which intersect at $0$. This proves the assertion.

For non-zero vector $X$ with other multiplicity types, namely, $\{6\}$,
$\{5,1\}$, $\{4,2\}$ and $\{3,3\}$, if we can find a vector $X'$ in
its $E_6$-Weyl group orbit with three different entries, the same
statement is still true. If we assume further that any vector in its
Weyl group orbit does not have three different entries, then additional
linear relations will be required between the entries of $X$. So up
to nonzero constant scalars, we can only find finite $X$'s in the
Cartan subalgebra $\mathfrak{h}$, even if there does exist such $X$. The
centralizer of such a $X$ is the Lie algebra of the isotropy group
for the $\mathrm{Ad}(G)$-action. It  contains $\mathfrak{h}$ and is obviously not
abelian. Hence its dimension is at least $n+2$. So the union of all
the $\mathrm{Ad}(G)$-orbits of the generic elements in the above sense  has a codimension at least $n+1$ (
the dimension of the centralizers minus the one dimension for scalar
changes). Thus the assertion of Theorem \ref{preparation-theorem} holds for $E_6$. This completes the proof of Theorem \ref{preparation-theorem}.

\noindent {\bf Acknowledgements.}\quad  We would like to thank  Dr. Libing Huang and Dr. Zhiguang Hu for useful discussions. This work was finished during the second author's visit to the Chern institute of Mathematics. He would like to express his deep gratitude to the members of the institute for their hospitality.

\end{document}